\DeclareMathOperator{\defect}{def}
\theoremstyle{plain}
\newtheorem{thm}{Theorem}[section]
\newtheorem*{thmA}{Theorem A}
\newtheorem*{thmB}{Theorem B}
\newtheorem{lem}[thm]{Lemma}
\theoremstyle{definition}
\newtheorem{dfn}[thm]{Definition}
\newcommand{\N}{\mathbb{N}}
\newcommand{\Z}{\mathbb{Z}}
\newcommand{\h}{\mathrm{ht}}
\newcommand{\UU}{\mathcal U}
\newcommand{\GG}{\mathcal G}
\begin{document}

\title{Outer commutator words are uniformly concise}

\author{Gustavo A. Fern\'andez-Alcober}
\address{Matematika Saila, Euskal Herriko Unibertsitatea, 48080 Bilbao, Spain}
\email{gustavo.fernandez@ehu.es}

\author{Marta Morigi}
\address{Dipartimento di Matematica, Universit\`a di Bologna,
Piazza di Porta San Donato 5, 40127 Bologna, Italy}
\email{mmorigi@dm.unibo.it}

\thanks{The first author is supported by the Spanish Ministry of Science and Innovation,
grant MTM2008-06680-C02-02, partly with FEDER funds, and by the Basque
Government, grant IT-252-07.
The second author is partially supported by MIUR (Project ``Teoria dei Gruppi e applicazioni'').}

\subjclass[2000]{Primary 20F10; Secondary 12L10}

\keywords{Outer commutator words; Verbal subgroups; Ultraproducts}

\maketitle

\begin{abstract}
We prove that outer commutator words are uniformly concise, i.e.\ if
an outer commutator word $\omega$ takes $m$ different values in a group $G$, then the
order of the verbal subgroup $\omega(G)$ is bounded by a function depending only on $m$,
and not on $\omega$ or $G$.
This is obtained as a consequence of a structure theorem for the subgroup $\omega(G)$,
which is valid if $G$ is soluble, and without assuming that $\omega$ takes finitely many
values in $G$.
More precisely, there is an abelian series of $\omega(G)$, such that every section of
the series can be generated by values of $\omega$ all of whose powers are also values of
$\omega$ in that section.
For the proof of this latter result, we introduce a new representation of outer
commutator words by means of binary trees, and we use the structure of the trees to set
up an appropriate induction.
\end{abstract}

\section{Introduction}

Let $X$ be a set of symbols, to which we refer as {\em indeterminates\/}.
In group theory, a word $\omega$ over $X$ is an element of the free group having $X$ as a
free basis.
If the expression for $\omega$ involves $k$ different indeterminates, then for every group $G$,
we obtain a function from $G^k$ to $G$ by substituting group elements for the indeterminates.
Thus we can consider the {\em set\/} $G_{\omega}$ of all values taken by this function,
that is,
\[
G_{\omega} = \{ \omega(g_1,\ldots,g_k) \mid \text{$g_i\in G$ for all $i=1,\ldots,k$} \}.
\]
The subgroup generated by $G_{\omega}$ is called the {\em verbal subgroup} of $\omega$ in $G$,
and is denoted by $\omega(G)$.
We say that two words are {\em equivalent\/} if they can be transformed into each other by
simply changing the names of the indeterminates.
Obviously, equivalent words define the same set of values and the same verbal subgroup.
For this reason, we may assume if necessary that all words are defined over the countable set
$X=\{x_1,x_2,\ldots\}$, and that if $\omega$ involves $k$ indeterminates, these are given by the
symbols $x_1,\ldots,x_k$.

Words which are formed by taking commutators are particularly interesting.
Among them, we have the {\em lower central words} $\gamma_i$, on $i$ indeterminates, which are
given by
\[
\gamma_1=x_1,
\qquad
\gamma_i=[\gamma_{i-1},x_i]=[x_1,\ldots,x_i],
\quad
\text{for $i\ge 2$,}
\]
and the {\em derived words} $\delta_i$, on $2^i$ indeterminates, defined recursively by
\[
\delta_0=x_1,
\qquad
\delta_i=[\delta_{i-1}(x_1,\ldots,x_{2^{i-1}}),\delta_{i-1}(x_{2^{i-1}+1},\ldots,x_{2^i})],
\quad
\text{for $i\ge 1$.}
\]
The words $\gamma_i$ and $\delta_i$ are particular instances of {\em outer commutator words},
which are words obtained by nesting commutators, but using always {\em different indeterminates}.
Thus $[[x_1,x_2],[x_3,x_4,x_5],x_6]$ is an outer commutator word, but the Engel word
$[x_1,x_2,x_2,x_2]$ is not.

A word $\omega$ is said to be {\em concise} if, for every group $G$, the finiteness of the set
$G_{\omega}$ implies that of $\omega(G)$.
In the 1960's, Turner-Smith published a couple of papers \cite{T-S,T-S2} related to word values
and verbal subgroups, where he indicates that Philip Hall had conjectured that every word is concise,
and that Hall himself had proved this for every non-commutator word (i.e.\ a word outside the commutator
subgroup of the free group), and for lower central words.
In \cite{T-S2}, Turner-Smith showed that also derived words are concise, and
Jeremy Wilson \cite{Wil} subsequently extended this result to all outer commutator words.
On the other hand, Hall's conjecture was eventually refuted in 1989 by Ivanov, see \cite{Iva}.

If a word $\omega$ is concise, it is natural to ask whether conciseness can be expressed in a
quantitative form; more precisely, provided that $|G_{\omega}|=m$, can we bound $|\,\omega(G)|$ by
a function depending only on $m$?
The answer to this question is positive, but this does not seem to be widely known among group
theorists and, to the best of our knowledge, there is no reference in the literature containing
this result.
For this reason, we have included an appendix at the end of the paper in which we give two different
proofs of this fact.
Both proofs need the ultraproduct construction for groups, over a non-principal ultrafilter.
The first one uses {\L}o\'s's Theorem from model theory, while the second one is derived directly
from the definition of an ultraproduct, and is due to Avinoam Mann.
Note that the existence of non-principal ultrafilters is independent of the Zermelo-Fraenkel (ZF)
axioms for set theory; it can be proved by using the Axiom of Choice (but is not equivalent to it).

The ultraproduct argument only shows the existence of bounds for concise words, but it does not provide
any explicit expressions for these bounds.
In the case of the commutator $\gamma_2=[x_1,x_2]$, one can use the results bounding the order of the
derived subgroup $G'$ in terms of the breadth (maximum size of a conjugacy class) of $G$.
If $G$ contains at most $m$ commutators, it follows that:
\begin{enumerate}
\item
If $G$ is soluble, then $|G'|\le m^{\frac{1}{2}(5+\log_2 m)}$.
(P. Neumann and Vaughan-Lee, \cite{NV}.)
\item
For a general group, $|G'|\le m^{\frac{1}{2}(13+\log_2 m)}$.
(Segal and Shalev, \cite{SS}.)
\end{enumerate}
More recently, Brazil, Krasilnikov and Shumyatsky \cite{BKS} have given explicit bounds for all lower
central words and for all derived words; as a matter of fact, they find a single upper bound for this
infinite family of words, namely $(m!)^m$.
The first main result of this paper shows that an even better uniform bound applies to all outer
commutator words.

\begin{thmA}
Let $\omega$ be an outer commutator word and let $G$ be a group.
If $|G_{\omega}|=m$, then:
\begin{enumerate}
\item
If $G$ is soluble, $|\omega(G)|\le 2^{m-1}$.
\item
If $G$ is not soluble, $|\omega(G)|\le (m-1)^{m-1}$.
\end{enumerate}
\end{thmA}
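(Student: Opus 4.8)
The plan is to derive Theorem~A from the structure theorem for soluble groups announced in the abstract (which provides an abelian series of $\omega(G)$ whose sections are generated by values all of whose powers are again values) together with a purely combinatorial lemma on abelian groups; the non-soluble case will instead rest on Wilson's conciseness theorem \cite{Wil} and a conjugation argument.

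The combinatorial heart is the following. \emph{Lemma.} If $A$ is a finite abelian group and $V\subseteq A$ is a generating set with $1\in V$ that is closed under taking powers (i.e.\ $v\in V$ implies $v^{t}\in V$ for every $t$), then $|A|\le 2^{|V|-1}$. I would prove this by induction on $|V|$: pick a nontrivial $v\in V$ and set $C=\langle v\rangle$, of order $n$, so that $1,v,\dots,v^{n-1}$ are $n$ distinct elements of $V$. Then in $A/C$ the image $\bar V$ is again generating, power-closed, contains $\bar 1$, and satisfies $|\bar V|\le |V|-(n-1)$, whence by induction $|A/C|\le 2^{|\bar V|-1}\le 2^{|V|-n}$ and
\[
|A| = n\,|A/C| \le n\,2^{|V|-n} \le 2^{|V|-1},
\]
the last step being just $n\le 2^{n-1}$, valid for all $n\ge 1$. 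Elementary abelian $2$-groups show this bound is sharp.

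For part (i), I would apply the structure theorem to obtain an abelian series $1=A_{0}\le A_{1}\le\cdots\le A_{s}=\omega(G)$, and in each section $S_{i}=A_{i}/A_{i-1}$ take the guaranteed generating values together with all their powers; this yields a power-closed generating set $V_{i}\subseteq (S_{i})_{\omega}$ with $1\in V_{i}$, so the Lemma gives $|S_{i}|\le 2^{|V_{i}|-1}$ and hence $|\omega(G)|=\prod_{i}|S_{i}|\le 2^{\sum_{i}(|V_{i}|-1)}$. It then remains to show $\sum_{i}(|V_{i}|-1)\le m-1$, and this is where the nesting of the series is used. Every nontrivial $w\in V_{i}$ is the image in $S_{i}$ of some value $u_{w}\in (A_{i})_{\omega}\subseteq G_{\omega}$ with $u_{w}\notin A_{i-1}$, so $u_{w}\in A_{i}\setminus A_{i-1}$. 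The assignment $w\mapsto u_{w}$ is injective: within a fixed section distinct $w$ have distinct images modulo $A_{i-1}$, while for $i<j$ one has $u_{w}\in A_{i}\le A_{j-1}$ but $u_{w'}\notin A_{j-1}$. Thus the nontrivial elements of the $V_{i}$ inject into $G_{\omega}\setminus\{1\}$, giving $\sum_{i}(|V_{i}|-1)\le m-1$ and therefore $|\omega(G)|\le 2^{m-1}$.

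For part (ii) the structure theorem is no longer available, and this is where I expect the main difficulty to lie. The plan is first to invoke Wilson's conciseness theorem to ensure that $H:=\omega(G)$ is finite, and then to exploit that $G_{\omega}$ is closed under conjugation and inversion. Since every conjugate of a value is again a value, each of the at most $m-1$ nontrivial values $v_{1},\dots,v_{r}$ that generate $H$ has at most $m-1$ conjugates, so $|H:C_{H}(v_{i})|\le m-1$; as the $v_{i}$ generate $H$ we have $Z(H)=\bigcap_{i}C_{H}(v_{i})$, and intersecting yields $|H:Z(H)|\le\prod_{i}|H:C_{H}(v_{i})|\le (m-1)^{m-1}$. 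The genuinely hard point, which I would tackle last, is to control the abelian centre $Z(H)$ and combine it with this estimate so as to bound $|H|$ itself by $(m-1)^{m-1}$; here one must analyse the values lying in $Z(H)$ directly, precisely because the power-closure furnished by the structure theorem is what is missing in the non-soluble setting.
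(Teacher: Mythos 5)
Your part (i) is correct and is in substance the paper's own argument: the paper likewise invokes Theorem B, but instead of your abelian-group lemma it refines the PCG-series to one with cyclic sections consisting entirely of $\omega$-values and then uses $\log_2 n\le n-1$; your inequality $n\le 2^{n-1}$ is the same estimate in disguise, and your injective lifting of the nontrivial generators to distinct elements of $G_{\omega}\setminus\{1\}$ is exactly how the paper obtains $\sum_i(|G_i/G_{i-1}|-1)\le m-1$. (One point worth making explicit: each $V_i$ is finite because $\omega$-values in $G/A_{i-1}$ are images of the $m$ values in $G$, and power-closure inside a finite set forces every generator to have finite order, so each section really is a finite abelian group and your lemma applies.)

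Part (ii), however, has a genuine gap, which you yourself flag: bounding $|H:Z(H)|\le(m-1)^{m-1}$ leaves $Z(H)$ completely uncontrolled, and since the budget $(m-1)^{m-1}$ is already spent on the central quotient, no refinement of that single estimate can yield the claimed bound on $|H|$. The paper closes the non-soluble case by a different decomposition that re-uses part (i) rather than setting it aside. Writing $h=\h(\omega)$, the word $\delta_{h+1}$ is $\omega$-valued, so with $l=|G_{\delta_{h+1}}|$ the soluble quotient $G/G^{(h+1)}$ has at most $m-l+1$ values of $\omega$ and part (i) gives $|\omega(G)/G^{(h+1)}|\le 2^{m-l}$. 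For $G^{(h+1)}$ itself one first shows that every $g=[a,b]\in G_{\delta_{h+1}}$ has order at most $(m-1)(m-2)$: in $H=\langle a,b\rangle$ counting conjugates of the nontrivial $\omega$-values $a$ and $b$ gives $|H:Z(H)|\le(m-1)(m-2)$, and Schur's Theorem bounds the exponent of $H'$ by this index. Dietzmann's Lemma together with the symmetry of $G_{\delta_{h+1}}$ (Lemma \ref{sym}) then yields $|G^{(h+1)}|\le(m-1)^{l-1}$, and finally $2^{m-l}(m-1)^{l-1}\le(m-1)^{m-1}$ because $m\ge 3$ (if $m\le 2$ then $\omega(G)$ is cyclic and $G$ is soluble). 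So the ingredient missing from your plan is precisely this: do not try to bound the centre of $\omega(G)$; instead split off a soluble quotient handled by (i) and bound the element orders in the remaining verbal subgroup via Schur's Theorem applied to two-generator subgroups. This route is also independent of Wilson's conciseness theorem, which the paper deliberately avoids relying on.
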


We suspect that the bounds of Theorem A might be sharpened to get close to the bounds given
above for the word $\gamma_2$.
Nevertheless, an examination of the papers giving upper bounds for $|G'|$ in terms of the breadth
clearly suggests that this would be better the subject of an independent paper, devoted specifically
to this question.
For this reason, we have not attempted to obtain sharp bounds in this paper, and we have contented
ourselves with the bounds of Theorem A which, on the other hand, are quite reasonable.

Theorem A follows without much effort from the following result, which yields structural
information about the verbal subgroup $\omega(G)$ provided that it is soluble (equivalently,
that $G$ is soluble), without assuming that $G_{\omega}$ is finite.

\begin{thmB}
Let $\omega$ be an outer commutator word, and let $G$ be  a soluble group.
Then there exists a series of subgroups from $1$ to $\omega(G)$ such that:
\begin{enumerate}
\item
All subgroups of the series are normal in $G$.
\item
Every section of the series is abelian and can be generated by values of $\omega$ all of whose
powers are also values of $\omega$ in that section.
\end{enumerate}
Furthermore, the length of this series only depends on the word $\omega$ and on the derived
length of $G$.
\end{thmB}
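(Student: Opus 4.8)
The plan is to argue by a double induction: an outer induction on the derived length $d$ of $G$, and an inner induction on the binary tree representing $\omega$. At the root of this tree one has a decomposition $\omega=[u,v]$, where $u$ and $v$ are outer commutator words on disjoint sets of indeterminates; since $G_u$ and $G_v$ are closed under conjugation, one checks that $\omega(G)=[u(G),v(G)]$. The degenerate word $\omega=x_1$ is dealt with by the derived series of $G$ itself, and the case where $G$ is abelian is immediate, so I may assume $d\ge 2$ and that the statement holds for all soluble groups of derived length $<d$. I would then set $N=G^{(d-1)}$, the last nontrivial term of the derived series; it is abelian and characteristic, and $G/N$ has derived length $d-1$.

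First I would construct the top part of the series, from $\omega(G)\cap N$ up to $\omega(G)$. Since $\omega(G)/(\omega(G)\cap N)\cong \omega(G)N/N=\omega(G/N)$, the outer inductive hypothesis provides a suitable series for $\omega(G/N)$, which I would pull back to a $G$-normal series between $\omega(G)\cap N$ and $\omega(G)$. Two points need checking: that the generating values in $G/N$ lift to genuine values of $\omega$ in $G$, which is clear since homomorphic images of values are values; and that the power-closure property survives the lifting. The latter holds because $\omega(G)\cap N$ sits at the bottom of this part of the series, so that two $\omega$-values which agree modulo $N$ automatically agree modulo the relevant term. This step is routine.

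The heart of the matter is the bottom part: a $G$-normal series from $1$ to $M:=\omega(G)\cap N$, with abelian sections (automatic, as $N$ is abelian) that are generated by $\omega$-values all of whose powers are again $\omega$-values in the section. Here I would use the abelianness of $N$ decisively. Writing $N$ additively and letting $\bar c$ denote conjugation by $c$ acting on $N$, the collection formula becomes $[c^{n},e]=(1+\bar c+\cdots+\bar c^{\,n-1})[c,e]$. Hence, as soon as one passes to a section of a $G$-series in which a generating value $[c,e]$ is fixed by $\bar c$, that is $[c,e,c]=0$, all the higher terms collapse and one gets $[c,e]^{n}=[c^{n},e]$ in that section. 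Since only powers of the single operator $\bar c$ are involved, these commute and the collapse is valid even though $G$ need not act on $N$ through an abelian quotient. Now $[c^{n},e]$ is itself an $\omega$-value precisely when $c^{n}$ is a value of $u$, and supplying this is exactly the power-closure property for the strictly smaller word $u$, which is where the inner induction on the tree feeds in.

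The hard part will be to organize a single $G$-invariant filtration of $M$ that meets all the demands at once: its terms must be normal in $G$; each section must actually be fixed by the relevant conjugations so that the collapse above applies; and, most delicately, every section must still be generated by honest $\omega$-values rather than by auxiliary elements such as the $[c,e,g]$ produced by a naive relative lower central series. Reconciling the refinement that forces $[c,e,c]=0$ with the recursion that replaces $c^{n}$ by a $u$-value, while never leaving the class of $\omega$-values, is precisely what the binary-tree bookkeeping is designed to control: each refinement is indexed by the subtrees of $u$ and $v$ and by the finitely many operators $\bar c-1$ that occur, so the number of sections introduced at each stage is bounded in terms of $\omega$ alone. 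Assembling the top and bottom parts then yields a series whose length satisfies a recursion in $d$ and in the complexity of the tree of $\omega$, giving the bound $f(\omega,d)$ asserted in the theorem.
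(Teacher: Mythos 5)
There is a genuine gap, and it sits exactly where you place the phrase ``the hard part.'' The ingredients you do supply are sound and in fact mirror pieces of the actual argument: the identity $[c^n,e]=[c,e]^n$ valid once $[c,e,c]=1$ in the relevant section is precisely the mechanism of Lemma~\ref{PCG-series of commutator}, and for derived words the centralizing hypothesis comes for free from abelianness of the appropriate term of the derived series, which is how Theorem~\ref{derivedword} is proved. But for a general outer commutator word $\omega=[\alpha,\beta]$ the whole difficulty is to \emph{manufacture} the condition $[c,e,c]=1$ --- equivalently, to reduce to the case $[\omega(G),\alpha(G)]=1$ or $[\omega(G),\beta(G)]=1$ --- by a filtration whose sections are still generated by honest $\omega$-values and not by the auxiliary elements $[c,e,g]$ that a naive relative lower central series produces. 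You correctly identify this obstacle and then appeal to unspecified ``binary-tree bookkeeping'' to dispose of it; no construction is given, and nothing in your outer induction on the derived length or your inner induction ``on the tree'' supplies one. The paper's resolution is a second induction on the \emph{defect} of $\omega$ (the number of vertices separating its tree from that of $\delta_h$): every proper extension $\varphi$ of $\omega$ of the same height has smaller defect, so by induction one may pass to the quotient by the product of all such $\varphi(G)$ (using Lemma~\ref{joinPCG}); then the generalized Three Subgroup Lemma (Lemma~\ref{generalized 3 subgroup lemma}), applied over the section of the tree cut below the highest level carrying a $\delta_i$ label, expresses $[\omega(G),\delta_i(G)]$ as a product of subgroups $\pi^{(v)}(G)$ each contained in some $\omega^{(v)}(G)$ with $\omega^{(v)}$ a proper extension of $\omega$, hence trivial. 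This is the idea your proposal is missing, and without it the bottom filtration cannot be built.

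A secondary problem is the anchor point of your decomposition: with $N=G^{(d-1)}$ the pulled-back top part ends at $\omega(G)\cap N$, but a PCG-series from $1$ to $\omega(G)\cap N$ would require that subgroup to be generated, modulo its predecessor, by $\omega$-values of $G$ lying in it, and there is no reason why $\omega(G)\cap N$ should be generated by $\omega$-values at all. The paper avoids this by never intersecting $\omega(G)$ with terms of the derived series of $G$; instead it builds the series upward out of subgroups of the form $[K,\beta(G)]$ and verbal subgroups of auxiliary words, each of which comes equipped with its own generating set of values.
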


The existence of such a series was proved in \cite{BKS} for derived words, and this particular
case is the starting point for our proof of Theorem B.
However, dealing with an arbitrary outer commutator word $\omega$ is a much more delicate matter,
since one has to keep control of the nesting of commutators in $\omega$, and then there might be
problems such as the commutator of two values of $\omega$ not being necessarily a value of $\omega$
(contrary to the case of derived words).
Our approach to the general case is geometric: we associate a labelled binary tree to every outer
commutator word, a tree which reflects clearly the structure of the word, and which makes it easy
to compare any two outer commutator words.
Then the argument proceeds by measuring, with the help of the tree, how distant the word in question
is from being a derived word, and using induction on this distance.
The tree of an outer commutator word is introduced in Section \ref{tree}, together with some related
concepts that will be needed, and the proofs of Theorems A and B are postponed to Section \ref{proofs}.

We would like to remark that our proof of Theorem A is independent of and provides an alternative to
the proof of the conciseness of outer commutator words given by Wilson in \cite{Wil}.
Wilson's argument is rather intricate and difficult to follow, and our geometric method provides a
proof which, we honestly believe, is much easier to understand.
Also, Wilson's proof goes by way of contradiction, and consequently he does not obtain any explicit
bounds.
On the other hand, notice that our proof lies within ZF, contrary to the proof of the existence of
bounds for concise words via ultraproducts.
To end this introduction, let us say that we are highly convinced that both the `tree method' introduced
in this paper and Theorem B may prove important tools for addressing other problems related to outer
commutator words.

\section{The tree of an outer commutator word}
\label{tree}

As already mentioned, a fundamental device for the proof of Theorem B is to associate a labelled
binary tree to every outer commutator word.
For this purpose, we give a recursive and more formal definition of outer commutator words, and we
use the same recursion to introduce the height and the labelled tree of such a word.
In the following, we say that two words $\alpha$ and $\beta$ are {\em disjoint\/} if the sets of
indeterminates appearing in the two words are disjoint.

\begin{dfn}
The set of outer commutator words, and the height and the labelled tree of an outer commutator word,
are defined recursively as follows:
\begin{enumerate}
\item
An indeterminate is an outer commutator of height $0$, and its tree is an isolated vertex,
labelled with the name of the indeterminate.
\item
If $\alpha$ and $\beta$ are disjoint outer commutator words, then also $\omega=[\alpha,\beta]$
is an outer commutator word.
The height $\h(\omega)$ of the word $\omega$ is taken to be the maximum of the heights of $\alpha$
and $\beta$ plus $1$, and the tree of $\omega$ is obtained by adding a new vertex with label $\omega$
and connecting it to the vertices labelled $\alpha$ and $\beta$ of the corresponding trees of these words.
\end{enumerate}
\end{dfn}

The tree of an outer commutator word $\omega$ provides a visual way of reading how $\omega$
is constructed by nesting commutators, easier than writing the actual expression of $\omega$ by
using commutator brackets.
We draw these trees by going downwards whenever we form a new commutator, so that the vertex
with label $\omega$ is placed at the root of the tree.
Every vertex $v$ is labelled with an outer commutator word, which we denote by $\omega_v$.
Note that the indeterminates correspond exactly to the vertices of degree $1$.
Also, the height of $\omega$ coincides with the height of the tree, that is, the largest distance
from the root to another vertex of the tree (which will be necessarily labelled by an indeterminate).
For example, the following are the trees for the words $\gamma_4$ and $\delta_3$:
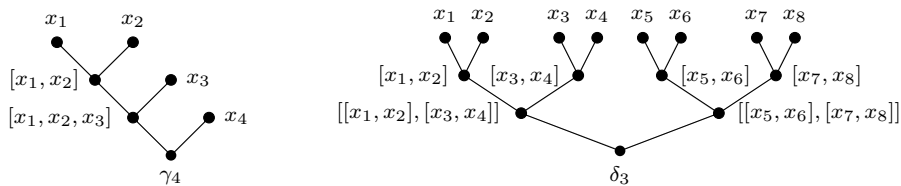
\begin{figure}[H]
\centering
\begin{tikzpicture}[level distance=5mm]
\tikzstyle{level 1}=[sibling distance=10mm]
\tikzstyle{level 2}=[sibling distance=10mm]
\tikzstyle{level 3}=[sibling distance=10mm]
\coordinate (root)[grow=up, fill] circle (2pt)
child {[fill] circle (2pt)}
child {[fill] circle (2pt)
       child {[fill] circle (2pt)}
       child {[fill] circle (2pt)
               child {[fill] circle (2pt)}
               child {[fill] circle (2pt)}
             }
      };
\node[below=2pt] at (root) {\scriptsize $\gamma_4$};
\node[left=4pt] at (root-2) {\scriptsize $[x_1,x_2,x_3]$};
\node[right=2pt] at (root-1) {\scriptsize $x_4$};
\node[left=2pt] at (root-2-2) {\scriptsize $[x_1,x_2]$};
\node[right=2pt] at (root-2-1) {\scriptsize $x_3$};
\node[above=2pt] at (root-2-2-2) {\scriptsize $x_1$};
\node[above=2pt] at (root-2-2-1) {\scriptsize $x_2$};
\end{tikzpicture}
\qquad
\begin{tikzpicture}[level distance=5mm]
\tikzstyle{level 1}=[sibling distance=26mm]
\tikzstyle{level 2}=[sibling distance=15mm]
\tikzstyle{level 3}=[sibling distance=5mm]
\coordinate (root)[grow=up, fill] circle (2pt)
child {[fill] circle (2pt)
       child {[fill] circle (2pt)
               child {[fill] circle (2pt)}
               child {[fill] circle (2pt)}
             }
       child {[fill] circle (2pt)
               child {[fill] circle (2pt)}
               child {[fill] circle (2pt)}
             }
      }
child {[fill] circle (2pt)
       child {[fill] circle (2pt)
               child {[fill] circle (2pt)}
               child {[fill] circle (2pt)}
             }
       child {[fill] circle (2pt)
               child {[fill] circle (2pt)}
               child {[fill] circle (2pt)}
             }
      };
\node[below=2pt] at (root) {\scriptsize $\delta_3$};
\node[left=4pt] at (root-2) {\scriptsize $[[x_1,x_2],[x_3,x_4]]$};
\node[right=3pt] at (root-1) {\scriptsize $[[x_5,x_6],[x_7,x_8]]$};
\node[left=2pt] at (root-2-2) {\scriptsize $[x_1,x_2]$};
\node[left=3pt] at (root-2-1) {\scriptsize $[x_3,x_4]$};
\node[right=3pt] at (root-1-2) {\scriptsize $[x_5,x_6]$};
\node[right=2pt] at (root-1-1) {\scriptsize $[x_7,x_8]$};
\node[above=2pt] at (root-2-2-2) {\scriptsize $x_1$};
\node[above=2pt] at (root-2-2-1) {\scriptsize $x_2$};
\node[above=2pt] at (root-2-1-2) {\scriptsize $x_3$};
\node[above=2pt] at (root-2-1-1) {\scriptsize $x_4$};
\node[above=2pt] at (root-1-2-2) {\scriptsize $x_5$};
\node[above=2pt] at (root-1-2-1) {\scriptsize $x_6$};
\node[above=2pt] at (root-1-1-2) {\scriptsize $x_7$};
\node[above=2pt] at (root-1-1-1) {\scriptsize $x_8$};
\end{tikzpicture}
\caption{The trees of the words $\gamma_4$ and $\delta_3$.}
\end{figure}
More generally, the full tree of height $h$ corresponds to the derived word $\delta_h$.

All labels of the tree of an outer commutator word are completely determined,
up to equivalence, by the tree itself (as a graph without labels): given the tree,
we only need to associate an indeterminate to every vertex of degree $1$, and then proceed
downwards by labeling each vertex with the commutator of the labels of its immediate
ascendants.

\vspace{5pt}

Observe that, if $\omega=[\alpha,\beta]$ is an outer commutator word, then the verbal subgroup
$\omega(G)$ coincides with the commutator subgroup $[\alpha(G),\beta(G)]$.

\vspace{5pt}

If $\omega$ is an outer commutator word, then the set $G_{\omega}$ is clearly invariant under conjugation by elements of $G$. We remark that
$G_{\omega}$ is not a subgroup in general; however, it has the following property.
\begin{lem}\label{sym}
Let $\omega$ be an outer commutator word. Then $G_{\omega}$ is symmetric, that is, $x\in G_{\omega}$ implies that $x^{-1}\in G_{\omega}$.
\end{lem}

\begin{proof}
We use induction on the height of $\omega$. If $\omega=x_1$ then the result is true.
Now assume that $\omega=[\alpha,\beta]$, where $\alpha,\beta$ are outer commutator words whose height is smaller than $\h(\omega)$.
An element of $G_{\omega}$ is of the form $[y,z]$, with $y\in G_{\alpha},z\in G_{\beta}$.
Then $[y,z]^{-1}=[y^z,z^{-1}]$, where $y^z\in G_{\alpha}$ because $G_{\alpha}$ is invariant under conjugation, and $z^{-1}\in G_{\beta}$
by induction. So $[y,z]^{-1}\in G_{\omega}$, as we wanted to prove.
\end{proof}

In the context of outer commutator words, in order to simplify the writing of words, it is
convenient to reinterpret expressions such as $[\alpha,\alpha]$ (which is $1$ in the free
group to which $\alpha$ belongs), by replacing the second $\alpha$ by an equivalent word whose
set of indeterminates is disjoint from that of the first $\alpha$.
More generally, we apply the same idea to every commutator $[\alpha,\beta]$ in which $\alpha$
and $\beta$ have some indeterminate in common, so that $[\alpha,\beta]$ is a well-defined outer
commutator word up to equivalence.
Allowing this notation, the derived words can be defined by $\delta_0=x_1$ and
$\delta_i=[\delta_{i-1},\delta_{i-1}]$ for $i\ge 1$, and the lower central words by $\gamma_1=x_1$
and $\gamma_i=[\gamma_{i-1},\gamma_1]$ for $i\ge 2$.
Also, the tree corresponding to the word $[\gamma_3,\gamma_3]$ is the following:
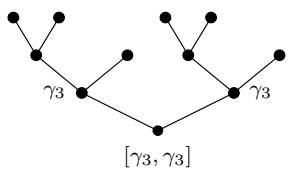
\begin{figure}[H]
\begin{tikzpicture}[level distance=5mm]
\tikzstyle{level 1}=[sibling distance=20mm]
\tikzstyle{level 2}=[sibling distance=12mm]
\tikzstyle{level 3}=[sibling distance=6mm]
\coordinate (root)[grow=up, fill] circle (2pt)
child {[fill] circle (2pt)
       child {[fill] circle (2pt)}
       child {[fill] circle (2pt)
               child {[fill] circle (2pt)}
               child {[fill] circle (2pt)}
             }
      }
child {[fill] circle (2pt)
       child {[fill] circle (2pt)}
       child {[fill] circle (2pt)
               child {[fill] circle (2pt)}
               child {[fill] circle (2pt)}
             }
      };
\node[below=2pt] at (root) {\scriptsize $[\gamma_3,\gamma_3]$};
\node[left=2pt] at (root-2) {\scriptsize $\gamma_3$};
\node[right=2pt] at (root-1) {\scriptsize $\gamma_3$};
\end{tikzpicture}
\caption{The tree of the outer commutator word $[\gamma_3,\gamma_3]$.}
\end{figure}

We note that the vertices of the tree are naturally positioned in levels.
More formally, we have the following.

\begin{dfn}
Let $v$ be a vertex of the tree of an outer commutator word $\omega$ of height $h$.
We say that $v$ is in the {\em $i$-th level} of the tree if it lies at distance $h-i$
from the root of the tree.
\end{dfn}

Thus the upmost level will be level $0$ and the root will be at level $h$, but note
that a vertex $v$ at level $i$ is not necessarily labelled with a word $\omega_v$ of
height $i$, it might even happen that $\omega_v$ is an indeterminate.

It is also useful to associate a {\em companion} vertex to each vertex of the tree
different from the root, defined as follows.

\begin{dfn}
Let $p$ be a vertex of the tree of an outer commutator word $\omega$, different from
the root, and let $u$ be the immediate descendant of $p$.
Then the {\em companion} of $p$ is the only other vertex $q$ of the tree which has $u$
as an immediate descendant.
\end{dfn}

It is clear that companion vertices lie on the same level of the tree.

\medskip

As said in the introduction, we will prove Theorem B for a general outer commutator word
$\omega$ by induction on the `distance' of $\omega$ to the closest derived word.
We make this notion of distance precise in the following definition.

\begin{dfn}
Let $\omega$ be an outer commutator word of height $h$.
Then the {\em defect\/} of $\omega$, which is denoted by $\defect\omega$, is defined as
\[
\defect\omega = 2^{h+1} - 1 - V,
\]
where $V$ is the number of vertices of the tree of $\omega$.
\end{dfn}

So, if the height of $\omega$ is $h$, then the defect is the number of vertices that need to be
added to the tree of $\omega$ in order to get the tree of $\delta_h$.
Thus the defect is $0$ if and only if $\omega$ is a derived word, and we have $\defect\gamma_4=8$
and $\defect[\gamma_3,\gamma_3]=4$.

\vspace{5pt}

Let now $\varphi$ and $\omega$ be two words, and let $F$ be the free group to which $\varphi$ belongs.
We say that $\varphi$ is {\em $\omega$-valued\/} if $\varphi\in F_{\omega}$.
If this is the case, then we have $G_{\varphi}\subseteq G_{\omega}$ for every group $G$, and in
particular $\varphi(G)\le \omega(G)$.
For example, $\delta_2$ is $\gamma_3$-valued, but not conversely.

\begin{dfn}
Let $\varphi$ and $\omega$ be two outer commutator words.
Then:
\begin{enumerate}
\item
We say that $\omega$ is a {\em constituent\/} of $\varphi$ if $\omega$ is, up to equivalence,
the label of a vertex in the tree of $\varphi$.
\item
We say that $\varphi$ is an {\em extension\/} of $\omega$, or that $\omega$ is a
{\em restriction\/} of $\varphi$, if the tree of $\varphi$ is an upward extension of the tree
of $\omega$ (simply as a tree, without labels).
\end{enumerate}
\end{dfn}

Thus, in order to get an extension of $\omega$, we only need to draw new binary trees at some of the
vertices which are labelled by indeterminates in the tree of $\omega$.
Equivalently, a restriction of $\omega$ is obtained by selecting a number of vertices and erasing all
branches lying on top of these vertices in the tree of $\omega$.
\begin{figure}[H]
\begin{tikzpicture}[level distance=5mm]
\tikzstyle{level 1}=[sibling distance=26mm]
\tikzstyle{level 2}=[sibling distance=13mm]
\tikzstyle{level 3}=[sibling distance=7mm]
\tikzstyle{level 4}=[sibling distance=4mm]
\coordinate (root)[grow=up, fill] circle (2pt)
child {[fill] circle (2pt)
       child {[fill] circle (2pt)
               child {[fill] circle (2pt)}
               child {[fill] circle (2pt)
                      child[gray!50] {[fill] circle (2pt)}
                      child[gray!50] {[fill] circle (2pt)}
                     }
             }
       child {[fill] circle (2pt)
               child {[fill] circle (2pt)
                      child[gray!50] {[fill] circle (2pt)}
                      child[gray!50] {[fill] circle (2pt)}
                     }
               child {[fill] circle (2pt)
                      child[gray!50] {[fill] circle (2pt)}
                      child[gray!50] {[fill] circle (2pt)}
                     }
             }
      }
child {[fill] circle (2pt)
       child {[fill] circle (2pt)
               child[gray!50] {[fill] circle (2pt)}
               child[gray!50] {[fill] circle (2pt)
                      child {[fill] circle (2pt)}
                      child {[fill] circle (2pt)}
                     }
             }
       child {[fill] circle (2pt)
               child {[fill] circle (2pt)}
               child {[fill] circle (2pt)
                      child {[fill] circle (2pt)}
                      child {[fill] circle (2pt)}
                     }
             }
      };
\node[below=2pt] at (root) {\scriptsize $[\gamma_4,\delta_2]$};
\draw[fill] (root-2-1) circle (2pt);
\draw[fill] (root-1-2-1) circle (2pt);
\draw[fill] (root-1-2-2) circle (2pt);
\draw[fill] (root-1-1-2) circle (2pt);
\end{tikzpicture}
\caption{\label{extensionfigure} An extension of $[\gamma_4,\delta_2]$.}
\end{figure}

In Figure~\ref{extensionfigure}, the black tree represents the word $\omega=[\gamma_4,\delta_2]$,
and the extension of $\omega$ which is obtained by adding the grey trees is
$\varphi=[[\gamma_3,\gamma_3],[\delta_2,\gamma_3]]$.
Without having to check the commutator structure of these two words, the trees show that $\varphi$
is $\omega$-valued.
On the other hand, observe that the derived word $\delta_h$ is an extension of all words of height
less than or equal to $h$.

\vspace{5pt}

The following lemma is straightforward.

\begin{lem}
\label{extension}
Let $\varphi$ and $\omega$ be two outer commutator words.
Then:
\begin{enumerate}
\item
If $\omega$ is a constituent of $\varphi$, then $\varphi(G)\le \omega(G)$.
\item
If $\varphi$ is an extension of $\omega$, then $\varphi$ is $\omega$-valued.
\end{enumerate}
\end{lem}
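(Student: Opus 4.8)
The plan is to prove both parts by induction on the tree of $\varphi$, using two facts already established in this section: the observation that $\omega(G)=[\alpha(G),\beta(G)]$ whenever $\omega=[\alpha,\beta]$, and the remark that the set $G_\alpha$ is invariant under conjugation, so that every verbal subgroup $\alpha(G)=\langle G_\alpha\rangle$ is normal in $G$.

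For part (i) I would induct on the height of $\varphi$. If $\varphi=\omega$ there is nothing to prove, so assume $\varphi=[\alpha,\beta]$. The vertices of the tree of $\varphi$ are its root together with the vertices of the trees of $\alpha$ and $\beta$; hence the constituent $\omega$ must, up to equivalence, label a vertex of one of these two subtrees, say of $\alpha$. By the inductive hypothesis $\alpha(G)\le\omega(G)$. Now $\varphi(G)=[\alpha(G),\beta(G)]$, and since $\alpha(G)$ is normal in $G$ we have $[\alpha(G),\beta(G)]\le\alpha(G)$. Combining these gives $\varphi(G)\le\alpha(G)\le\omega(G)$, as desired.

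For part (ii) the key is to read off, directly from the extension, an explicit substitution that exhibits $\varphi$ as a value of $\omega$. By definition the tree of $\omega$ sits inside the tree of $\varphi$, and each vertex of degree one in the tree of $\omega$, carrying an indeterminate $x_i$, is the root of a subtree of the tree of $\varphi$. That subtree is itself the tree of an outer commutator word $\psi_i$, and the words $\psi_1,\ldots,\psi_k$ may be taken pairwise disjoint because $\varphi$ is an outer commutator word. Since the labels of any such tree are produced by the same downward rule—each vertex is labelled by the commutator of the labels of its two immediate ascendants—computing the label at the root of $\varphi$ coincides with substituting $\psi_i$ for $x_i$ throughout $\omega$. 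Thus $\varphi=\omega(\psi_1,\ldots,\psi_k)$ in the free group $F$ to which $\varphi$ belongs, so $\varphi\in F_\omega$ and $\varphi$ is $\omega$-valued.

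I do not expect a genuine obstacle here, in keeping with the paper's description of the lemma as straightforward. The only points that call for a little care are the appeal in part (i) to the normality of $\alpha(G)$, which is what lets one push the commutator subgroup $[\alpha(G),\beta(G)]$ back inside $\alpha(G)$; and in part (ii) the verification that the label at the root of $\varphi$ really equals the substituted word $\omega(\psi_1,\ldots,\psi_k)$. This last identity is precisely what the recursive labelling rule delivers, and it can be made rigorous by a short induction on the number of vertices added in passing from the tree of $\omega$ to that of $\varphi$.
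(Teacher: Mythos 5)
Your proof is correct, and it is exactly the argument the authors intend: the paper states the lemma with the single comment that it is straightforward and gives no proof at all. Both halves check out — in (i) the normality of $\alpha(G)$ (a verbal subgroup, hence normal) indeed gives $\varphi(G)=[\alpha(G),\beta(G)]\le\alpha(G)\le\omega(G)$, and in (ii) the substitution $\varphi=\omega(\psi_1,\ldots,\psi_k)$ read off from the upward extension of the tree is precisely what makes $\varphi$ an $\omega$-value.
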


\section{Proof of Theorems A and B}
\label{proofs}

Before proceeding to the proof of Theorem B, we need some lemmas.
First, we need to introduce the following concept.

\begin{dfn}
Let $T$ be the tree associated to an outer commutator word $\omega$.
A subset $S$ of vertices of $T$ is called a {\em section\/} of $T$ if $S$ is maximal (with
respect to inclusion) subject to the condition that $S$ does not contain two vertices which
are one a descendant of the other.
Equivalently, in terms of labels, this means that every indeterminate involved in $\omega$
appears in exactly one word $\omega_v$ with $v\in S$.
\end{dfn}

Visually, taking a section is nothing but cutting the tree from side to side.
\begin{figure}[H]
\begin{tikzpicture}[level distance=5mm]
\tikzstyle{level 1}=[sibling distance=26mm]
\tikzstyle{level 2}=[sibling distance=13mm]
\tikzstyle{level 3}=[sibling distance=7mm]
\tikzstyle{level 4}=[sibling distance=4mm]
\coordinate (root)[grow=up, fill] circle (2pt)
child {[fill] circle (2pt)
       child {[fill] circle (2pt)
               child {[fill] circle (2pt)}
               child {[fill] circle (2pt)}
             }
       child {[fill] circle (2pt)
               child {[fill] circle (2pt)}
               child {[fill] circle (2pt)}
             }
      }
child {[fill] circle (2pt)
       child {[fill] circle (2pt)
               child {[fill] circle (2pt)}
               child {[fill] circle (2pt)
                      child {[fill] circle (2pt)}
                      child {[fill] circle (2pt)}
                     }
             }
       child {[fill] circle (2pt)
               child {[fill] circle (2pt)}
               child {[fill] circle (2pt)
                      child {[fill] circle (2pt)}
                      child {[fill] circle (2pt)}
                     }
             }
      };
\node[below=2pt] at (root) {\scriptsize $[\gamma_3,\gamma_3,\delta_2]$};
\node[transparent] (a) at (root-2-2-2) {\scriptsize aa}
node[transparent] (b) at (root-2-2-1) {\scriptsize a}
node[transparent] (c) at (root-2-1) {\scriptsize a}
node[transparent] (d) at (root-1-2) {\scriptsize a}
node[transparent] (e) at (root-1-1-2) {\scriptsize a}
node[transparent] (f) at (root-1-1-1) {\scriptsize aa};
\draw[dashed,rounded corners] (a.west) -- (a.north) -- (b.north) -- (c.north)
-- (d.north) -- (e.north) -- (f.north) -- (f.east) -- (f.south) -- (e.south)
-- (d.south) -- (c.south) -- (b.south) -- (a.south) -- cycle;
\node[right=3mm] at (root-1-1-1) {\scriptsize $S$};
\end{tikzpicture}
\caption{\label{section}A section of $[\gamma_3,\gamma_3,\delta_2]$.}
\end{figure}
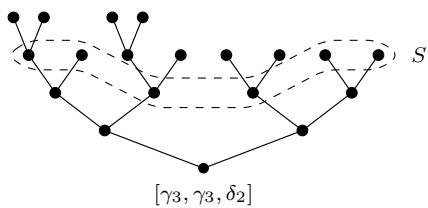

A very natural way of obtaining a section is by cutting a tree below level $i$, that is,
we consider the section $S$ containing all vertices at level $i+1$ and all the vertices
of the tree lying below level $i+1$ labelled by an indeterminate.
This is the type of section that we will use in the proof of Theorem B.
\begin{figure}[H]
\begin{tikzpicture}[level distance=5mm]
\tikzstyle{level 1}=[sibling distance=26mm]
\tikzstyle{level 2}=[sibling distance=13mm]
\tikzstyle{level 3}=[sibling distance=7mm]
\tikzstyle{level 4}=[sibling distance=4mm]
\coordinate (root)[grow=up, fill] circle (2pt)
child {[fill] circle (2pt)
       child {[fill] circle (2pt)}
       child {[fill] circle (2pt)
               child {[fill] circle (2pt)}
               child {[fill] circle (2pt)
                      child {[fill] circle (2pt)}
                      child {[fill] circle (2pt)}
                     }
             }
      }
child {[fill] circle (2pt)
       child {[fill] circle (2pt)}
       child {[fill] circle (2pt)
               child {[fill] circle (2pt)}
               child {[fill] circle (2pt)
                      child {[fill] circle (2pt)}
                      child {[fill] circle (2pt)}
                     }
             }
      };
\node[below=2pt] at (root) {\scriptsize $[\gamma_4,\gamma_4]$};
\node[transparent] (a) at (root-2-2-2) {\scriptsize aa}
node[transparent] (b) at (root-2-2-1) {\scriptsize a}
node[transparent] (c) at (root-2-1) {\scriptsize a}
node[transparent] (d) at (root-1-2-2) {\scriptsize a}
node[transparent] (e) at (root-1-2-1) {\scriptsize a}
node[transparent] (f) at (root-1-1) {\scriptsize aa};
\draw[dashed,rounded corners] (a.west) -- (a.north) -- (b.north) -- (c.north)
-- (d.north) -- (e.north) -- (f.north) -- (f.east) -- (f.south) -- (e.south)
-- (d.south) -- (c.south) -- (b.south) -- (a.south) -- cycle;
\node[right=3mm] at (root-1-1) {\scriptsize $S$};
\end{tikzpicture}
\caption{\label{section}Section of $[\gamma_4,\gamma_4]$ by cutting below level $0$.}
\end{figure}
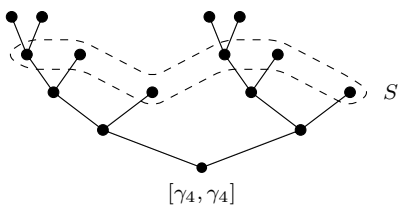

If $\omega=[\alpha,\beta]$ and $\gamma$ are two outer commutator words, then by the Three
Subgroup Lemma, we have
\[
[\omega(G),\gamma(G)]
\le
\pi^{(1)}(G) \pi^{(2)}(G),
\]
where $\pi^{(1)}=[[\alpha,\gamma],\beta]$ and $\pi^{(2)}=[\alpha,[\beta,\gamma]]$ are also outer commutator
words.
Observe that the tree of $\pi^{(1)}$ is very similar to that of $\omega$: one only needs to replace the
tree on top of the vertex labelled $\alpha$ with the tree corresponding to $[\alpha,\gamma]$.
The same happens with $\pi^{(2)}$, with $\beta$ playing the role of $\alpha$.
The following lemma is a generalization of this fact; instead of stopping at the vertices labelled
$\alpha$ and $\beta$, by iterating the process we can reach an arbitrary section of the tree.

\begin{lem}
\label{generalized 3 subgroup lemma}
Let $\omega$ be an outer commutator word, and let $T$ be the tree of $\omega$.
If $\gamma$ is another outer commutator word, then for every $v\in T$, we define $\pi^{(v)}$ to be
the word whose tree is obtained by replacing the tree of $\omega_v$ at vertex $v$ with the
tree of $[\omega_v,\gamma]$.
Then, for every section $S$ of $T$, and for every group $G$, we have
\[
[\omega(G),\gamma(G)]
\le
\prod_{v\in S} \, \pi^{(v)}(G).
\]
\end{lem}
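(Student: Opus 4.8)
The plan is to argue by induction on the height $h$ of $\omega$, equivalently on the number of vertices of $T$, proving the inequality simultaneously for every section $S$ and every auxiliary outer commutator word $\gamma$. Before starting the induction I would record two elementary facts that are used repeatedly. Since every verbal subgroup is fully invariant, all the subgroups $\omega_v(G)$, $\gamma(G)$ and the various commutators formed from them are normal in $G$; consequently each product $\prod_{v\in S}\pi^{(v)}(G)$ really is a subgroup, and for normal subgroups $X,Y,Z$ one has the distributivity identity $[XY,Z]=[X,Z][Y,Z]$. From this I obtain both monotonicity (if $X\le Y$ then $[X,Z]\le[Y,Z]$) and the fact that commutation with a fixed normal subgroup converts a product of subgroups into the product of the corresponding commutators; these are exactly the tools needed to transport an inequality through the tree.

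For the base case, if $\omega$ is an indeterminate then $T$ consists of the root $r$ alone, the only section is $S=\{r\}$, and $\pi^{(r)}=[\omega,\gamma]$, so the claim reduces to the identity $[\omega(G),\gamma(G)]=[\omega,\gamma](G)$ recorded in Section~\ref{tree}. For the inductive step write $\omega=[\alpha,\beta]$, with root $r$ and immediate ascendants $a,b$ labelled $\alpha,\beta$. If $S=\{r\}$ the statement is once more the identity $[\omega(G),\gamma(G)]=\pi^{(r)}(G)$, so there is nothing to prove.

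The remaining case is $r\notin S$. Here $S$ splits as a disjoint union $S=S_\alpha\cup S_\beta$, where $S_\alpha$ is a section of the subtree lying on top of $a$ and $S_\beta$ a section of the subtree on top of $b$. The key bookkeeping observation is that, for a vertex $v$ in the $\alpha$-subtree, inserting $\gamma$ at $v$ only alters the $\alpha$-part, so that $\pi^{(v)}=[\pi_\alpha^{(v)},\beta]$, where $\pi_\alpha^{(v)}$ denotes the corresponding word built inside $\alpha$; hence $\pi^{(v)}(G)=[\pi_\alpha^{(v)}(G),\beta(G)]$ and, by distributivity, $\prod_{v\in S_\alpha}\pi^{(v)}(G)=[\prod_{v\in S_\alpha}\pi_\alpha^{(v)}(G),\beta(G)]$. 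Applying the induction hypothesis to the smaller word $\alpha$ gives $[\alpha(G),\gamma(G)]\le\prod_{v\in S_\alpha}\pi_\alpha^{(v)}(G)$, and monotonicity then yields $[[\alpha(G),\gamma(G)],\beta(G)]\le\prod_{v\in S_\alpha}\pi^{(v)}(G)$; symmetrically one gets $[\alpha(G),[\beta(G),\gamma(G)]]\le\prod_{v\in S_\beta}\pi^{(v)}(G)$. Finally I would invoke the consequence of the Three Subgroup Lemma displayed just before the statement, namely $[\omega(G),\gamma(G)]\le[[\alpha,\gamma],\beta](G)\,[\alpha,[\beta,\gamma]](G)$, and combine it with the two bounds above to conclude that $[\omega(G),\gamma(G)]\le\prod_{v\in S}\pi^{(v)}(G)$, as required.

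The part that I expect to require most care is precisely the bookkeeping in this last case: one must verify carefully that $\pi^{(v)}$ for a vertex of the $\alpha$-subtree factors as $[\pi_\alpha^{(v)},\beta]$, so that the induction hypothesis for $\alpha$ applies verbatim, and that the identity $[XY,Z]=[X,Z][Y,Z]$ may legitimately be iterated across the whole product indexed by $S_\alpha$. Both points rest on the normality of all the subgroups in sight, which is why I isolate that observation at the very beginning; once it is secured, the Three Subgroup Lemma furnishes exactly the two-term bound needed to glue the estimates from the two subtrees back together.
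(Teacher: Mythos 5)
Your proof is correct, but it runs the induction in the opposite direction from the paper. You induct on the word $\omega$ itself (splitting at the root $r$, decomposing a section $S$ with $r\notin S$ as $S_\alpha\cup S_\beta$, applying the inductive bound to $\alpha$ and $\beta$ separately, and gluing with a single application of the Three Subgroup Lemma at the root plus the distributivity identity $[XY,Z]=[X,Z][Y,Z]$ for normal subgroups). The paper instead fixes $\omega$ and inducts on the number of vertices of the section $S$: it picks a vertex $p\in S$ at maximal distance from the root, shows its companion $q$ also lies in $S$, replaces the pair $\{p,q\}$ by their common descendant $u$ to get a smaller section $S'$, applies the inductive hypothesis to $S'$, and then uses the Three Subgroup Lemma locally at $u$ to get $\pi^{(u)}(G)\le\pi^{(p)}(G)\pi^{(q)}(G)$. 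Your top-down argument has the advantage that the Three Subgroup Lemma is invoked only at the root and the bookkeeping reduces to the clean factorization $\pi^{(v)}=[\pi_\alpha^{(v)},\beta]$; the price is that you must explicitly iterate $[XY,Z]=[X,Z][Y,Z]$ across the whole product over $S_\alpha$, whereas the paper's bottom-up merging changes only one factor of the product at each step (though it silently uses the same kind of normality-based propagation when passing from $[\omega_u(G),\gamma(G)]\le\pi^{(p)}_u(G)\pi^{(q)}_u(G)$ at the vertex $u$ to $\pi^{(u)}(G)\le\pi^{(p)}(G)\pi^{(q)}(G)$ at the root). One small wording issue: induction on the height of $\omega$ and induction on the number of vertices of $T$ are not literally equivalent, but either parameter decreases strictly when you pass from $\omega$ to $\alpha$ and $\beta$, so your induction is sound with either choice.
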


\begin{proof}
We argue by induction on the number $n$ of vertices of $S$.
The case $n=1$ is obvious, so we assume that $n\ge 2$.
We also observe that the product $\prod_{v\in S} \, \pi^{(v)}(G)$ depends only on the subgroups
$\pi^{(v)}(G)$, for $v\in S$, and not on the order in which they appear, since all those subgroups are
normal in $G$.
Let $p$ be a vertex in $S$  which has maximum distance from the root, let $q$ be its companion vertex,
and let $u$ be the immediate descendant of $p$ and $q$.
Since each of the indeterminates involved in the word $\omega_u$ appears in exactly one of the
words $\omega_v$ with $v\in S$, it necessarily follows from the assumption about $p$ that $q\in S$.
Now let $S'$ be the section of $T$ which is obtained from $S$ by deleting $p$ and
$q$, and inserting $u$.
By applying the induction hypothesis to $S'$, we have
\begin{equation}
\label{formula for S'}
[\omega(G),\gamma(G)]
\le
\pi^{(u)}(G) \, \prod_{\substack{v\in S\\ v\ne p,q}} \, \pi^{(v)}(G).
\end{equation}
On the other hand, by the Three Subgroup Lemma,
\begin{align*}
[\omega_u(G),\gamma(G)]
&=
[\omega_p(G),\omega_q(G),\gamma(G)]
\\
&\le
[[\omega_p(G),\gamma(G)],\omega_q(G)] [\omega_p(G),[\omega_q(G),\gamma(G)]],
\end{align*}
and consequently $\pi^{(u)}(G)\le \pi^{(p)}(G)\pi^{(q)}(G)$, which completes the proof by
(\ref{formula for S'}).
\end{proof}

\begin{dfn}
Let $\omega$ be an outer commutator word, and let $G$ be a group.
A series of normal subgroups of $G$ is said to be {\em power-closed generated}
(or a {\em PCG-series\/}, for short) with respect to $\omega$ if every section $H/J$ of
the series is abelian and can be generated by values of $\omega$ in $G/J$ all of whose
powers are again values of $\omega$ in $G/J$.
\end{dfn}

It is clear that a series $H_0\le H_1 \le \cdots \le H_n$ of normal subgroups of a group
$G$ is a PCG-series with respect to $\omega$ if and only if, for every $i=1,\ldots,n$,
the quotient $H_i/H_{i-1}$ is abelian and we can choose a subset $S_i$ of $G_{\omega}$ such that:
\begin{enumerate}
\item[(P1)]
$H_i/H_{i-1}=\langle xH_{i-1}\mid x\in S_i \rangle$, i.e.\ $H_i=\langle S_i \rangle H_{i-1}$.
\item[(P2)]
$x^nH_{i-1} \in (G/H_{i-1})_{\omega}$ for every $x\in S_i$ and every $n\in\Z$.
\end{enumerate}
Furthermore, since the set of $\omega$-values is closed under conjugation, and the subgroups
in a PCG-series are normal, we may assume if necessary that $S_i$ is a normal subset of $G$.

Obviously, any PCG-series with respect to $\omega$ beginning from the trivial subgroup is
contained in $\omega(G)$, and the content of Theorem B is precisely that, starting from $1$,
we can always reach $\omega(G)$ with a PCG-series provided that $G$ is soluble.

Moreover, we note that if $\varphi$ is another outer commutator word which is $\omega$-valued,
then any PCG-series with respect to $\varphi$  is also a PCG-series with respect to $\omega$.
We will repeatedly use this fact in the sequel without further mention.

Now we state two more lemmas that we need for the proof of Theorem B.

\begin{lem}\label{joinPCG}
Let $\omega$ be an outer commutator word, let $G$ be a group, and let $K$ and $L$
be two normal subgroups of $G$.
If there are two PCG-series with respect to $\omega$ from $1$ to $K$ and from $1$
to $L$, then there is also a PCG-series from $1$ to $KL$.
\end{lem}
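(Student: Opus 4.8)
The plan is to build the PCG-series from $1$ to $KL$ by concatenating the given series from $1$ to $K$ with a suitable ``shifted'' version of the series from $1$ to $L$. Concretely, suppose the given PCG-series are
\[
1 = K_0 \le K_1 \le \cdots \le K_r = K
\qquad\text{and}\qquad
1 = L_0 \le L_1 \le \cdots \le L_s = L,
\]
each consisting of normal subgroups of $G$. I would form the candidate series
\[
1 = K_0 \le K_1 \le \cdots \le K_r = K \le KL_1 \le KL_2 \le \cdots \le KL_s = KL.
\]
All these subgroups are normal in $G$, since each $K_i$ and each $L_j$ is, and the product of normal subgroups is normal. The lower half is a PCG-series by hypothesis, so the real work is to verify that the upper half, with sections of the form $KL_{j}/KL_{j-1}$, is also a PCG-series with respect to $\omega$.

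First I would check that each upper section is abelian. There is a natural surjection $L_j/L_{j-1} \twoheadrightarrow KL_j/KL_{j-1}$, induced by the inclusion $L_j \hookrightarrow KL_j$; since $L_{j}/L_{j-1}$ is abelian, so is its image $KL_j/KL_{j-1}$. Next, to produce the required generating set, I would take the normal subset $T_j \subseteq G_\omega$ witnessing property (P1) for the section $L_j/L_{j-1}$, so that $L_j = \langle T_j \rangle L_{j-1}$, and use the \emph{same} set $T_j$ for the section $KL_j/KL_{j-1}$. Then
\[
KL_j = K \langle T_j \rangle L_{j-1} = \langle T_j \rangle\, KL_{j-1},
\]
which gives (P1). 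The elements of $T_j$ are $\omega$-values in $G$, hence remain $\omega$-values in every quotient of $G$, so in particular in $G/KL_{j-1}$.

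The main obstacle, and the step I would spend the most care on, is property (P2): I must check that for each $x \in T_j$, every power $x^n$ is still an $\omega$-value modulo $KL_{j-1}$, and not merely modulo $L_{j-1}$. By hypothesis $x^n L_{j-1} \in (G/L_{j-1})_\omega$, meaning there exist $\omega$-arguments whose $\omega$-value lies in $x^n L_{j-1}$. Since $L_{j-1} \le KL_{j-1}$, the natural projection $G/L_{j-1} \twoheadrightarrow G/KL_{j-1}$ sends $(G/L_{j-1})_\omega$ into $(G/KL_{j-1})_\omega$ — evaluating a word in the image of arguments is the image of the value — so the image $x^n KL_{j-1}$ is again an $\omega$-value in $G/KL_{j-1}$. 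This confirms (P2) for the shifted sections, and completes the verification that the concatenated series is a PCG-series from $1$ to $KL$.
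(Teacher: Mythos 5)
Your proposal is correct and follows essentially the same route as the paper: the authors' (very terse) proof rests on exactly the observation that a normal abelian section $H/J$ generated by $\omega$-values with all powers $\omega$-values retains this property after multiplying by a normal subgroup, and then concatenates the two series. You have merely written out the verification of (P1) and (P2) that the paper leaves implicit, and done so correctly.
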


\begin{proof}
If $H/J$ is a normal abelian section of $G$ generated by values of $\omega$ such that all
of their powers are also values of $\omega$, then also the section $HL/JL$ has
this property.
Now the result readily follows.
\end{proof}

\begin{lem}
\label{PCG-series of commutator}
Let $\alpha$ and $\beta$ be two outer commutator words, and let $G$ be a group.
If there is a PCG-series from $K$ to $L$ in the group $G$ with respect to $\alpha$,
and if $[L,\beta(G),L]=1$, then by taking commutators with $\beta(G)$ we obtain a
PCG-series from $[K,\beta(G)]$ to $[L,\beta(G)]$ with respect to $[\alpha,\beta]$.
In particular, if there is a PCG-series from $1$ to $\alpha(G)$ with respect to $\alpha$,
and if $[\alpha(G),\beta(G),\alpha(G)]=1$, then there is also a series from $1$ to
$[\alpha(G),\beta(G)]$ with respect to $[\alpha,\beta]$.
\end{lem}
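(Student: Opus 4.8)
The plan is to transport the given series through the operation $H\mapsto[H,\beta(G)]$. Write the PCG-series with respect to $\alpha$ as $K=H_0\le H_1\le\cdots\le H_n=L$, with normal generating sets $S_i\subseteq G_\alpha$ satisfying (P1) and (P2), and set $M_i=[H_i,\beta(G)]$. Each $M_i$ is normal in $G$, the chain $M_0\le\cdots\le M_n$ is increasing, and its ends are $[K,\beta(G)]$ and $[L,\beta(G)]$, so everything reduces to checking the two PCG-conditions on the sections $M_i/M_{i-1}$. The engine of the whole argument is the hypothesis $[L,\beta(G),L]=1$. Since $L\trianglelefteq G$ forces $[L,\beta(G)]\le L$, this hypothesis says that $[L,\beta(G)]$ is central in $L$; in particular it is abelian, so every $M_i$, and hence every section $M_i/M_{i-1}$, is abelian. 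Even more usefully, for each fixed $b\in\beta(G)$ the map $\psi_b\colon L\to[L,\beta(G)]$, $h\mapsto[h,b]$, is a homomorphism: the identity $[hh',b]=[h,b]^{h'}[h',b]$ collapses to $[h,b][h',b]$ because $h'\in L$ centralizes $[h,b]\in[L,\beta(G)]$. This single observation is what removes every conjugation correction term below.

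For condition (P1) I would take $S_i'=\{[x,b]\mid x\in S_i,\ b\in G_\beta\}$, a normal subset of $G_{[\alpha,\beta]}$ since $S_i$ and $G_\beta$ are conjugation-invariant. By the standard fact that $[H_i,\beta(G)]=\langle[h,b]\mid h\in H_i,\ b\in G_\beta\rangle$ (valid because $G_\beta$ is a normal subset generating $\beta(G)$ and $H_i\trianglelefteq G$), it suffices to express each such $[h,b]$ modulo $M_{i-1}$ in terms of $S_i'$. Fixing $b$ and applying the homomorphism $\psi_b$ to the equality $H_i=\langle S_i\rangle H_{i-1}$ gives $[H_i,b]=\psi_b(H_i)=\langle[x,b]\mid x\in S_i\rangle\,\psi_b(H_{i-1})$, and $\psi_b(H_{i-1})\subseteq[H_{i-1},\beta(G)]=M_{i-1}$. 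Hence $[h,b]\in\langle S_i'\rangle M_{i-1}$ for all such $h,b$, which yields $M_i=\langle S_i'\rangle M_{i-1}$, i.e.\ (P1).

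For condition (P2), fix $x\in S_i$, $b\in G_\beta$ and $n\in\Z$. Since $x\in H_i\le L$ and $\psi_b$ is a homomorphism on $L$, we get $[x,b]^n=\psi_b(x)^n=\psi_b(x^n)=[x^n,b]$. By (P2) for the original series there is an $\alpha$-value $a\in G_\alpha$ with $x^n\equiv a\pmod{H_{i-1}}$; writing $x^n=at$ with $t\in H_{i-1}$ and noting that $a=x^nt^{-1}\in L$, the homomorphism property gives $[x^n,b]=\psi_b(a)\psi_b(t)=[a,b][t,b]\equiv[a,b]\pmod{M_{i-1}}$, because $[t,b]\in[H_{i-1},\beta(G)]=M_{i-1}$. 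As $[a,b]$ is a genuine value of $[\alpha,\beta]$ in $G$, its image modulo $M_{i-1}$ is a value of $[\alpha,\beta]$ in $G/M_{i-1}$; thus $[x,b]^nM_{i-1}$ is such a value, which is exactly (P2). This makes $M_0\le\cdots\le M_n$ a PCG-series with respect to $[\alpha,\beta]$, proving the main assertion. The final statement then follows by taking $K=1$ and $L=\alpha(G)$, so that $[K,\beta(G)]=1$, $[L,\beta(G)]=[\alpha(G),\beta(G)]$, and the hypothesis becomes $[\alpha(G),\beta(G),\alpha(G)]=1$.

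The main obstacle is conceptual rather than computational, and it is concentrated in the single step where the hypothesis $[L,\beta(G),L]=1$ is converted into the statement that $\psi_b$ is a homomorphism in its first argument: one must be careful that there is \emph{no} analogous homomorphism property in the second argument, since conjugation by $\beta(G)$ need not be trivial on $[L,\beta(G)]$. Consequently the reduction to generators must be carried out in the first slot via $\psi_b$ and in the second slot via the standard generation lemma, and never by trying to expand $b$ as a product of $\beta$-values directly. The remaining care is routine: one checks that the auxiliary element $a$ produced by (P2) actually lies in $L$ (so that $\psi_b$ applies to it), and that the sets $S_i'$ consist of honest $[\alpha,\beta]$-values, which uses the conjugation-invariance of $G_\alpha$ and $G_\beta$ together with Lemma \ref{sym} for inverses.
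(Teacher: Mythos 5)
Your argument is correct and follows essentially the same route as the paper: you take the same candidate generating sets $\{[x,b]\mid x\in S_i,\ b\in G_\beta\}$, derive abelianness of the sections from $[L,\beta(G),L]=1$, and verify (P1) and (P2) by the same commutator expansions (the paper writes $[a_nb_n,y]=[a_n,y]^{b_n}[b_n,y]$ directly where you package the identity as the homomorphism $\psi_b$, but the content is identical). The only cosmetic difference is that the paper establishes (P1) by observing that $H_i$ and $\beta(G)$ commute modulo $\langle T_i\rangle[H_{i-1},\beta(G)]$ rather than invoking the generation fact for $[H_i,\beta(G)]$; both are standard and equivalent.
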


\begin{proof}
We first note that the condition $[L,\beta(G),L]=1$ implies in particular that $[L,\beta(G)]$
is abelian, so that any section of this group is also abelian.

Let $K=H_0\le H_1 \le \cdots \le H_n=L$ be a PCG-series with respect to $\alpha$.
We fix an integer $i$ from $1$ to $n$, and choose a normal subset $S_i$ of $G$ which is contained
in $G_{\alpha}$, and which satisfies properties (P1) and (P2) above.
We claim that the set $T_i=\{[x,y]\mid x\in S_i,\ y\in G_{\beta}\}$ satisfies
(P1) and (P2) for the section $[H_i,\beta(G)]/[H_{i-1},\beta(G)]$ and the word
$[\alpha,\beta]$.
This proves the result, since $T_i$ is contained in $G_{[\alpha,\beta]}$.

First of all, since $S_i$ and $G_{\beta}$ are normal subsets of $G$, the same is
true about $T_i$.
Then $N=\langle T_i \rangle [H_{i-1},\beta(G)]$ is a normal subgroup of $G$,
and $H_i$ and $\beta(G)$ clearly commute modulo $N$.
Thus $[H_i,\beta(G)]\le N$, and property (P1) follows.
Now let $[x,y]$ be an element of $T_i$, with $x\in S_i$ and $y\in G_{\beta}$.
By using the fact that $[L,\beta(G),L]=1$, we have $[x,y]^n=[x^n,y]$ for every $n\in\Z$.
Since $S_i$ satisfies (P2), we can write $x^n=a_nb_n$, with $a_n\in H_{i-1}$ and
$b_n\in G_{\alpha}$.
Thus
\[
[x,y]^n
=
[a_nb_n,y]
=
[a_n,y]^{b_n} [b_n,y]
\equiv
[b_n,y]
\pmod{[H_{i-1},\beta(G)]},
\]
which proves that (P2) holds for $T_i$.
\end{proof}

Now we can easily see that Theorem B is true for derived words.
This fact is already proved in Lemma 3.3 of \cite{BKS}, and the proof we provide is essentially the same.
We include it here for the sake of completeness, and because the use of Lemma \ref{PCG-series of commutator}
simplifies the presentation. In
the following, $G^{(i)}$ will denote as usual the $i$-th term $\delta_i(G)$ of the
derived series of a group $G$.

\begin{thm}
\label{derivedword}
Let $G$ be a soluble group.
Then, for every $i\ge 0$, there exists a PCG-series from 1 to $G^{(i)}$ with respect to $\delta_i$.
Furthermore, if the derived length of $G$ is $d$, there is such a series of length at most
$2^{d}-2^{i}$ if $d\ge i$, or $0$ if $d\le i$.
\end{thm}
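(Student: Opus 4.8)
The plan is to argue by induction on the derived length $d$ of $G$, proving the statement for all $i$ simultaneously. If $i\ge d$ then $G^{(i)}=1$ and the trivial series (of length $0$) does the job, so I may assume $i<d$, in which case $G^{(d-1)}\le G^{(i)}$. The first move is to dispose of everything above the last nontrivial term of the derived series: passing to $\bar G=G/G^{(d-1)}$, which has derived length $d-1$, the inductive hypothesis furnishes a PCG-series from $1$ to $\bar G^{(i)}=G^{(i)}/G^{(d-1)}$ with respect to $\delta_i$ of length at most $2^{d-1}-2^i$, and lifting it back gives a PCG-series from $G^{(d-1)}$ to $G^{(i)}$ in $G$. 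Thus the whole problem reduces to producing a PCG-series from $1$ to the \emph{abelian} normal subgroup $G^{(d-1)}$, of length at most $2^{d-1}$; and here it suffices to use the word $\delta_{d-1}$ in place of $\delta_i$, since $\delta_{d-1}$ is an extension of $\delta_i$ (as $d-1\ge i$), hence $\delta_i$-valued by Lemma \ref{extension}, so any PCG-series with respect to $\delta_{d-1}$ is automatically one with respect to $\delta_i$.

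The heart of the argument, and the step I expect to be the main obstacle, is this reduced problem: a PCG-series from $1$ to the abelian group $A:=G^{(d-1)}=[G^{(d-2)},G^{(d-2)}]$ with respect to $\delta_{d-1}=[\delta_{d-2},\delta_{d-2}]$. The natural attempt is to invoke Lemma \ref{PCG-series of commutator} with $\alpha=\beta=\delta_{d-2}$ and a PCG-series from $1$ to $M:=G^{(d-2)}$ with respect to $\delta_{d-2}$; but this requires the hypothesis $[M,M,M]=1$, which fails in general, as it amounts to $G^{(d-2)}$ being nilpotent of class at most $2$. Everything hinges on circumventing this condition, and the key leverage is that $A$ is abelian.

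To get around it I would filter $A$ by the $G$-invariant subgroup $R:=[A,M]\le A$ and treat the two sections separately. For the bottom section I take a PCG-series from $1$ to $A$ with respect to $\delta_{d-2}$ whose terms all lie in $A$, and apply the general form of Lemma \ref{PCG-series of commutator} to its successive steps, taking commutators with $\beta(G)=M$: at each term $L$ the hypothesis $[L,M,L]=1$ now holds \emph{automatically}, because $L\le A$ and $[L,M]\le[A,M]\le A$ give $[[L,M],L]\le[A,A]=1$; this yields a PCG-series from $1$ to $[A,M]=R$ with respect to $\delta_{d-1}$. For the top section I pass to $G/R$, where $[\overline A,\overline M]=1$ makes $\overline M$ nilpotent of class at most $2$, so that $[\overline M,\overline M,\overline M]=1$ and Lemma \ref{PCG-series of commutator} applies directly to give a PCG-series from $1$ to $[\overline M,\overline M]=A/R$ with respect to $\delta_{d-1}$; lifting and concatenating with the bottom section completes the series from $1$ to $A$. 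The auxiliary PCG-series for $M$ and for $A$ with respect to $\delta_{d-2}$ that feed into these applications are themselves instances of the theorem at index $d-2$, so the main organisational delicacy, beyond the obstacle above, is to interleave them with the present construction without circularity and to check that the section counts add up to $2^d-2^i$ (the top contributing $2^{d-1}-2^i$ and the abelian tail $G^{(d-1)}$ at most $2^{d-1}$). Finally, the base case $i=0$ is immediate, since $\delta_0=x_1$ realises every element of $G$ as a value and any $G$-invariant abelian series, for instance the derived series, is then a PCG-series with respect to $\delta_0$.
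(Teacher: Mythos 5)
Your overall strategy is essentially the paper's: reduce to a PCG-series through the abelian layer $A=G^{(d-1)}$, filter $A$ by $R=[A,M]$ with $M=G^{(d-2)}$, and apply Lemma \ref{PCG-series of commutator} twice, once to climb from $1$ to $R$ and once, modulo $R$, to climb from $R$ to $A$. The bottom section is fine. The genuine gap is in the top section: there you invoke Lemma \ref{PCG-series of commutator} in its ``in particular'' form, which requires as input a PCG-series from $1$ all the way up to $\delta_{d-2}(G/R)=\overline{M}=G^{(d-2)}/R$ with respect to $\delta_{d-2}$. You describe this input as ``an instance of the theorem at index $d-2$'', but the relevant group is $G/R$ (or $G$ itself), whose derived length is in general still $d$, since $R=[A,M]$ is usually a proper subgroup of $A$; so your induction on $d$ does not supply it, and as written the argument is circular. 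Moreover, even if you assemble that input by concatenating a series from $1$ to $\overline{A}$ (coming from $G'$) with one from $\overline{A}$ to $\overline{M}$ (coming from $G/A$), its length is about $2^{d-1}$, so the abelian tail costs roughly $2^{d-2}+2^{d-1}=3\cdot 2^{d-2}$ rather than the $2^{d-1}$ you budget, and the stated bound $2^{d}-2^{i}$ is lost.

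Both problems disappear if you use the general ``from $K$ to $L$'' form of Lemma \ref{PCG-series of commutator} rather than the special case starting at $1$: take the PCG-series from $A$ to $M$ with respect to $\delta_{d-2}$ (obtained from the theorem applied to $G/A$, which has derived length $d-1$ and hence is covered by your induction), observe that $[M,M,M]\le[A,M]=R$ so the hypothesis holds modulo $R$, and commutate with $\beta(G)=M$ to get a PCG-series from $[A,M]=R$ to $[M,M]=A$ with respect to $\delta_{d-1}$ of length at most $2^{d-2}$. This is exactly how the paper's proof of the abelian case proceeds (there organised as an induction on $i$ rather than on $d$, a purely cosmetic difference), and with it the tail costs $2^{d-2}+2^{d-2}=2^{d-1}$ and the total comes out to $2^{d}-2^{i}$ as required.
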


\begin{proof}
We first deal with the particular case when $G^{(i)}$ is abelian.
Let us prove, by induction on $i$, that there is a PCG-series of length $2^{i}$ from $1$ to
$G^{(i)}$ with respect to $\delta_i$.
This is obvious for $i=0$, so we assume that $i\ge 1$ and that the result holds for $i-1$.
If we apply it to the group $G'$, we obtain a PCG-series of length at most $2^{i-1}$ from $1$ to
$G^{(i)}$ with respect to $\delta_{i-1}$.
By Lemma \ref{PCG-series of commutator}, with $\alpha=\beta=\delta_{i-1}$, it follows that there
is a PCG-series of the same length from $1$ to $[G^{(i)},G^{(i-1)}]$ with respect to $\delta_i$.
On the other hand, if we use again the result for $\delta_{i-1}$, but in this case with the group
$G/G^{(i)}$, we get a PCG-series of length at most $2^{i-1}$ from $G^{(i)}$ to $G^{(i-1)}$ with
respect to $\delta_{i-1}$.
Another application of Lemma \ref{PCG-series of commutator} yields a PCG-series from
$[G^{(i)},G^{(i-1)}]$ to $[G^{(i-1)},G^{(i-1)}]=G^{(i)}$ with respect to $\delta_i$.
Now we can connect the two PCG-series with respect to $\delta_i$ that we have obtained so far,
and the induction is complete.

Let us now deal with the general case.
If $i\ge d$ there is nothing to prove, so we assume that $i<d$.
By the last paragraph, for every $j$ between $i$ and $d-1$ there is a PCG-series from $G^{(j+1)}$
to $G^{(j)}$ with respect to $\delta_j$, of length $2^{j}$.
Since $\delta_j$ is $\delta_i$-valued for $j\ge i$, by connecting these series we obtain a
PCG-series from $1$ to $G^{(i)}$ with respect to $\delta_i$ of length at most $2^{d}-2^{i}$,
as desired.
\end{proof}

We can now prove Theorem B for arbitrary outer commutator words.

\begin{proof}[Proof of Theorem B]
We concentrate on proving the existence of a PCG-series with respect to $\omega$ from 1 to
$\omega(G)$; a close examination of the proof that follows shows that the length of the PCG-series
constructed only depends on $\omega$ and on the derived length of $G$, and not on the particular
group $G$.

We argue by double induction: we first use induction on the heigth of the word $\omega$, and then,
for a fixed value of the height, induction on the defect of $\omega$.
If $\omega$ has height $0$, then $\omega=x_1$ and the result is trivially true.
Now assume that $h=\h(\omega)\ge 1 $ and that the result has been proved for any outer commutator
word whose height is less than $h$.
If $\defect(\omega)=0$ then $\omega$ is a derived word, and the result holds by
Theorem \ref{derivedword}, so we assume that $\defect(\omega)>0$.
Let us write $\omega=[\alpha,\beta]$, where $\alpha$ and $\beta$ are outer commutator words of
height smaller than $h$.
Then we have a PCG-series from $1$ to $\alpha(G)$ with respect to $\alpha$, and another one from
$1$ to $\beta(G)$ with respect to $\beta$.
If we can reduce ourselves to the case that $[\omega(G),\alpha(G)]=1 $ or that $[\omega(G),\beta(G)]=1$,
then the proof of the theorem will be complete by invoking Lemma \ref{PCG-series of commutator}.

Let $\Phi$ be the (finite) set of all outer commutator words of height $h$ which are a proper
extension of $\omega$.
By the induction hypothesis on the defect, for every $\varphi$ in $\Phi$, there is a PCG-series from $1$ to
$\varphi(G)$ with respect to $\omega$, since $\varphi$ is $\omega$-valued according to
Lemma \ref{extension}.
By using Lemma \ref{joinPCG}, we can combine the series corresponding to all different words in
$\Phi$, and get a single PCG-series whose last term $L$ contains $\varphi(G)$ for all $\varphi$ in
$\Phi$.
For the theorem to be proved, it suffices to find the desired PCG-series with respect to $\omega$
in the quotient $G/L$, and so we may assume in the remainder that $\varphi(G)=1$ for all $\varphi$
in $\Phi$.
We cannot guarantee in general that $[\omega,\alpha]$ or $[\omega,\beta]$ belong to the set $\Phi$.
However, we prove below that at least one of the subgroups $[\omega(G),\alpha(G)]$ and
$[\omega(G),\beta(G)]$ is contained in a product of verbal subgroups corresponding to words in $\Phi$,
and is consequently equal to $1$, as desired.

Let $i$ be the largest integer for which there is a vertex in the tree of $\omega$ at level $i$
with label $\delta_i$.
Note that $1\le i<h$, since $\omega$ is not a derived word.
Let $S$ be the  section of the tree of $\omega$ obtained by cutting the tree below level $i$, so
that $S$ contains all vertices at level $i+1$ and all the vertices of the tree lying below level
$i+1$ which are labelled with an indeterminate.
For every vertex $v$ in $S$, we construct a word $\omega^{(v)}$ as follows.
If the label $\omega_v$ of $v$ is not an indeterminate, then we can write $\omega_v=[\omega_p,\omega_q]$,
where $p$ and $q$ are the companion vertices at level $i$ having $v$ as immediate descendant.
By the maximality of $i$, one of these vertices is labelled with a word which is different from $\delta_i$.
For simplicity, let us assume that this happens for $q$, the vertex on the right (the argument is exactly
the same otherwise).
We define $\omega^{(v)}$ to be the word whose tree is obtained by replacing $\omega_q$ with $\delta_i$ in
the tree of $\omega$.
Thus the label of $\omega^{(v)}$ at the vertex $v$ is the commutator $[\omega_p,\delta_i]$.
On the other hand, if $\omega_v$ is an indeterminate, then $\omega^{(v)}$ is defined simply by
putting the tree corresponding to $\delta_i$ on top of the vertex $v$ in the tree of $\omega$.

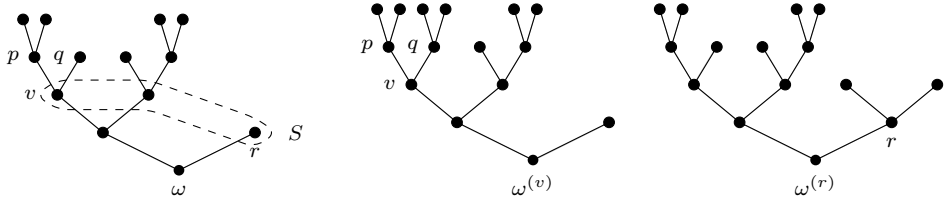
\begin{figure}[H]
\centering
\begin{tikzpicture}[level distance=5mm]
\tikzstyle{level 1}=[sibling distance=20mm]
\tikzstyle{level 2}=[sibling distance=12mm]
\tikzstyle{level 3}=[sibling distance=6mm]
\tikzstyle{level 4}=[sibling distance=3mm]
\coordinate (root)[grow=up, fill] circle (2pt)
child {[fill] circle (2pt)  }
child {[fill] circle (2pt)
       child {[fill] circle (2pt)
               child {[fill] circle (2pt)
                      child {[fill] circle (2pt)}
                      child {[fill] circle (2pt)}
                     }
              child {[fill] circle (2pt)}
             }
       child {[fill] circle (2pt)
               child {[fill] circle (2pt)}
               child {[fill] circle (2pt)
                      child {[fill] circle (2pt)}
                      child {[fill] circle (2pt)}
                     }
             }
      };
\node[transparent] (a) at (root-2-2) {\scriptsize aa}
node[transparent] (b) at (root-2-1) {\scriptsize a}
node[transparent] (c) at (root-1){\scriptsize aa};
\draw[dashed,rounded corners] (a.west) -- (a.north) -- (b.north)  --
(c.north) -- (c.east) -- (c.south) -- (b.south) -- (a.south) -- cycle;
\node[right=3mm] at (root-1) {\scriptsize $S$};
\node[left=2pt] at (root-2-2-2) {\scriptsize $p$};
\node[left=2pt] at (root-2-2-1) {\scriptsize $q$};
\node[left=4pt] at (root-2-2) {\scriptsize $v$};
\node[below=2pt] at (root-1) {\scriptsize $r$};
\node[below=2pt] at (root) {\scriptsize $\omega$};
\end{tikzpicture}
\quad
\begin{tikzpicture}[level distance=5mm]
\tikzstyle{level 1}=[sibling distance=20mm]
\tikzstyle{level 2}=[sibling distance=12mm]
\tikzstyle{level 3}=[sibling distance=6mm]
\tikzstyle{level 4}=[sibling distance=3mm]
\coordinate (root)[grow=up, fill] circle (2pt)
child {[fill] circle (2pt)  }
child {[fill] circle (2pt)
       child {[fill] circle (2pt)
               child {[fill] circle (2pt)
                      child {[fill] circle (2pt)}
                      child {[fill] circle (2pt)}
                     }
              child {[fill] circle (2pt)}
             }
       child {[fill] circle (2pt)
               child {[fill] circle (2pt)
               child {[fill] circle (2pt)}
                      child {[fill] circle (2pt)}
                      }
               child {[fill] circle (2pt)
                      child {[fill] circle (2pt)}
                      child {[fill] circle (2pt)}
                     }
             }
      };
\node[left=2pt] at (root-2-2-2) {\scriptsize $p$};
\node[left=2pt] at (root-2-2-1) {\scriptsize $q$};
\node[left=2pt] at (root-2-2) {\scriptsize $v$};
\node[below=2pt] at (root) {\scriptsize $\omega^{(v)}$};
\end{tikzpicture}
\quad
\begin{tikzpicture}[level distance=5mm]
\tikzstyle{level 1}=[sibling distance=20mm]
\tikzstyle{level 2}=[sibling distance=12mm]
\tikzstyle{level 3}=[sibling distance=6mm]
\tikzstyle{level 4}=[sibling distance=3mm]
\coordinate (root)[grow=up, fill] circle (2pt)
child {[fill] circle (2pt)
                      child {[fill] circle (2pt)}
                      child {[fill] circle (2pt)} }
                      child {[fill] circle (2pt)
       child {[fill] circle (2pt)
               child {[fill] circle (2pt)
                      child {[fill] circle (2pt)}
                      child {[fill] circle (2pt)}
                     }
              child {[fill] circle (2pt)}
             }
       child {[fill] circle (2pt)
               child {[fill] circle (2pt)}
               child {[fill] circle (2pt)
                      child {[fill] circle (2pt)}
                      child {[fill] circle (2pt)}
                     }
             }
      };
\node[below=2pt] at (root-1) {\scriptsize $r$};
\node[below=2pt] at (root) {\scriptsize $\omega^{(r)}$};
\end{tikzpicture}
\caption{\label{section}The two different cases for the construction of $\omega^{(v)}$ with $v\in S$.
Observe that $i=1$ in this example.}
\end{figure}

In any case, it is clear that $\h(\omega^{(v)})=h$ and that $\omega^{(v)}$ is a proper extension of
$\omega$, so that $\omega^{(v)}$ belongs to $\Phi$.
Consequently, we have $\omega^{(v)}(G)=1$ for every vertex $v$ in the section $S$.

On the other hand, if we apply Lemma \ref{generalized 3 subgroup lemma} to the section $S$ with
$\delta_i$ playing the role of $\gamma$, then we have
\begin{equation}
\label{omega-delta}
[\omega(G),\delta_i(G)]
\le
\prod_{v\in S} \, \pi^{(v)}(G).
\end{equation}
Here, $\pi^{(v)}$ is the word whose tree is obtained by inserting the tree of $[\omega_v,\delta_i]$
at vertex $v$ in the tree of $\omega$.
Now, it is easy to compare the two words $\omega^{(v)}$ and $\pi^{(v)}$: they look the same at all
vertices of the original tree of $\omega$, except for the vertex $v$, where $\pi^{(v)}$ has the label
$[\omega_v,\delta_i]$ and $\omega^{(v)}$ has either $[\omega_p,\delta_i]$ or $\delta_i$.
In any of the two cases, we have
\[
(\pi^{(v)})_v(G) \le (\omega^{(v)})_v(G),
\]
and then, since $\pi^{(v)}$ and $\omega^{(v)}$ have the same labels outside the tree above $v$,
also
\[
\pi^{(v)}(G) \le \omega^{(v)}(G).
\]
Since this happens for all vertices in $S$, it follows from (\ref{omega-delta}) that
$[\omega(G),\delta_i(G)]=1$.
Now, by the definition of $i$, the derived word $\delta_i$ is a constituent of either $\alpha$ or
$\beta$, and consequently either $[\omega(G),\alpha(G)]=1$ or $[\omega(G),\beta(G)]=1$.
As explained above, this completes the proof.
\end{proof}

Finally, we derive Theorem A from Theorem B by adapting the argument given by Brazil,
Krasilnikov and Shumyatsky in \cite{BKS} for the case of derived words.
We will need Dietzmann's Lemma, whose proof can we found in \cite[14.5.7]{Rob}.

\begin{lem}
\label{Dietzmann}
If $G$ is a group and $X=\{c_1,\ldots, c_n\}$ is a normal subset, then
every element $y\in \langle X\rangle$ is of the form
$y=\prod_{i=1}^nc_i^{r_i}$, for some integers $r_1,\ldots,r_n$.
\end{lem}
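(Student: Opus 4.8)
The plan is to prove this by the classical \emph{collection process}, with induction on $n=|X|$. The first thing I would record is the basic move that drives everything: for any $a\in X$ and any $b\in X\cup X^{-1}$ one has the identity $ba=a\,b^{a}$, and since $X$ is a normal subset of $G$, the conjugate $b^{a}$ again lies in $X\cup X^{-1}$. The feature I would exploit is that this move is asymmetric: when I slide a \emph{fixed} letter $a$ leftwards through a word in the $c_i^{\pm1}$, the letter $a$ is itself never altered, while each letter it passes is merely replaced by one of its conjugates and so stays inside $X\cup X^{-1}$.

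Next I would use this to gather. Writing a given $y\in\langle X\rangle$ as a word in the $c_i^{\pm1}$, I would pull every occurrence of $c_1^{\pm1}$ to the extreme left; by the move above these occurrences remain equal to $c_1^{\pm1}$ and coalesce into a single power $c_1^{r_1}$. Since conjugation by $c_1$ fixes $c_1$ and merely permutes $X\setminus\{c_1\}$, no $c_i$ with $i\neq1$ is ever turned into $c_1$ during the sweep, so the residual word lies in $H=\langle c_2,\dots,c_n\rangle$. Equivalently, $c_1$ normalizes $H$, whence $\langle X\rangle=\langle c_1\rangle H$ and every $y$ has the form $y=c_1^{r_1}h$ with $h\in H$. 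Applying the inductive hypothesis to $H$ and the set $\{c_2,\dots,c_n\}$ would then order the tail and deliver $y=c_1^{r_1}c_2^{r_2}\cdots c_n^{r_n}$.

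The hard part will be justifying this last inductive step, because $\{c_2,\dots,c_n\}$ need not be a normal subset of $H$: conjugating one $c_i$ with $i\ge2$ by another may again produce $c_1$. Thus the conjugation action genuinely permutes the indices, and one cannot simply sort the letters by index, which is the crux of the matter. The standard way around it, which I would follow, is Dietzmann's own bookkeeping (\cite[14.5.7]{Rob}): one works with a shortest expression of $y$ as a word in $X\cup X^{-1}$ and uses the gathering move to collect, one value at a time, all occurrences of each generator, keeping track of how the surviving letters are permuted; the finiteness of $X$ is what ultimately forces the process to terminate at a product of the required shape. I would transcribe this argument and read off the exponents $r_1,\dots,r_n$ at the end.
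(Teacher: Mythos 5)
The paper does not actually prove this lemma: it only cites \cite[14.5.7]{Rob}, so there is no in-paper argument to compare against. Your sketch is sound where it is explicit --- the sweep collecting $c_1$ is correctly justified by the observation that conjugation by $c_1^{\pm 1}$ fixes $c_1^{\pm 1}$ and permutes the remaining elements of $X\cup X^{-1}$, so that sweep creates no new occurrences of $c_1$ --- and you correctly diagnose the one genuine difficulty, namely that $\{c_2,\dots,c_n\}$ need not be a normal subset of $\langle c_2,\dots,c_n\rangle$ (e.g.\ $X$ the class of transpositions in $S_3$), so the naive induction on $n$ breaks and later sweeps can recreate earlier letters. Since you resolve that difficulty exactly as the paper does, by deferring to Robinson's collection bookkeeping, your proposal is consistent with the paper's treatment; just be aware that, as a standalone proof, it still owes the termination argument it postpones, which is precisely the nontrivial content of Dietzmann's lemma.
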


\begin{proof}[Proof of Theorem A]
Suppose first that $G$ is soluble.
If $\omega(G)=1$ the result is trivial, so we may assume that $\omega(G)$ is not the
identity subgroup.
By Theorem B, there is a PCG-series
\[
1=H_0 < H_1 < \cdots < H_n=\omega(G).
\]
Since $G_{\omega}$ is finite, each of the abelian quotients $H_i/H_{i-1}$ can be generated
by a finite number of values of $\omega$ all of whose powers are again values of $\omega$.
Then we can refine this PCG-series to a subnormal series
\[
1=G_0 < G_1 < \cdots < G_k=\omega(G)
\]
in which every section $G_{i}/G_{i-1}$ is a non-trivial cyclic group consisting entirely of
values of $\omega$.
Observe that, contrary to the original PCG-series, the length of this refined series may depend
on the group $G$ (more precisely, on the rank of $G$); however, this will have no effect in the
proof.
Now, for every non-trivial element $x$ in $G_{i}/G_{i-1}$, there exists
$y\in G_{\omega}\setminus \{1\}$ such that $x=yG_{i-1}$, and consequently
\[
|G_1/G_0|+|G_2/G_1|+\cdots +|G_k/G_{k-1}|\le m+k-1.
\]
Observe that $\log_2|G_i/G_{i-1}|\le |G_i/G_{i-1}|-1$ for all $i$, since $|G_i/G_{i-1}|\ge 2$.
Hence
\[
|\,\omega(G)|
=
\prod_{i=1}^k|G_i/G_{i-1}|
=
2^{\sum_{i=1}^k \log_2|G_i/G_{i-1}| }
\le
2^{\sum_{i=1}^k |G_i/G_{i-1}|-k}
\le
2^{m-1},
\]
which proves part (i) of Theorem A.

Now assume that $G$ is non-soluble.
Observe that $m\ge 3$ in this case, since otherwise $\omega(G)$ is cyclic and $G$ is soluble.
If $h$ is the height of $\omega$, then $\delta_h$ is $\omega$-valued, and the same holds
for $\delta_{h+1}$.
Let $|G_{\delta_{h+1}}|=l$.
Then $|(G/G^{(h+1)})_{\omega}|\le m-l+1$, and by the bound for the soluble case, it follows
that
\[
|\,\omega(G)/G^{(h+1)}|\le 2^{m-l}.
\]
Now we bound the order of $G^{(h+1)}$.
We claim that the order of an element $g\in G_{\delta_{h+1}}$ is at most $(m-1)(m-2)$.
Of course, we may assume $g\ne 1$.
Let us write $g=[a,b]$ with $a,b\in G_{\delta_h}$, and consider the subgroup
$H=\langle a,b \rangle$.
Let $C=C_H(a)$.
Since $a\in G_{\omega}\setminus\{1\}$, it has at most $m-1$ conjugates in $G$, and
consequently $|H:C|\le m-1$.
Now $C$ permutes the $m-1$ non-trivial values of $\omega$ in $G$, and leaves the element $a$
fixed by definition.
Thus $|C:C_C(b)|\le m-2$, and consequently $|H:Z(H)|=|H:C_H(a)\cap C_H(b)|\le (m-1)(m-2)$.
By applying Schur's Theorem \cite[10.1.4]{Rob} to $H$, it follows that the exponent of $H'$ is at
most $(m-1)(m-2)$, which proves the claim.

Let $G_{\delta_{h+1}}=\{c_0,c_1,c_2,\ldots,c_{l-1}\}$, where $c_0=1$. By Lemma \ref{sym}, the set
$G_{\delta_{h+1}}$ is symmetric, so we can assume that $c_i=c_i^{-1}$
for every $i=0,\ldots,t$ for some $0\le t\le l-1$ and  that $c_{t+2j}=c_{t+2j-1}^{-1}$ for each
$j=1,\ldots,(l-1-t)/2$ (note that $l-1-t$ is even). Since
$G_{\delta_{h+1}}$ is a normal subset of $G$, it follows from Lemma \ref{Dietzmann} that every element $w$ of $\delta_{h+1}(G)$ is
of the form $c_1^{n_1}c_2^{n_2}\cdots c_{l-1}^{n_{l-1}}$, where $1\le
n_i\le |c_i|$. Now, we have two choices for each $n_i$ with $1\le
i\le t$ (if $t\ge 1$, otherwise we have nothing to choose) and at most $(m-1)(m-2)$ choices for each product of the
form
$c_{t+2j-1}^{n_{t+2j-1}}c_{t+2j}^{n_{t+2j}}=c_{t+2j-1}^{n_{t+2j-1}-n_{t+2j}}$.
So
\[
|G^{(h+1)}|
\le
2^t[(m-1)(m-2)]^{\frac{l-1-t}2}
\le
2^t(m-1)^{l-1-t}
\le
(m-1)^{l-1},
\]
and we conclude that
\begin{align*}
|\,\omega(G)|
&=
|\,\omega(G)/G^{(h+1)}|\;|G^{(h+1)}|
\le
2^{m-l}(m-1)^{l-1}
\\
&\le
(m-1)^{m-1},
\end{align*}
since $m\ge 3$.
This completes the proof of Theorem A.
\end{proof}

\section{Appendix: Existence of bounds via ultraproducts}

In this appendix, we give two different proofs of the following result, mentioned in
the introduction.

\begin{thm}
\label{quantitative}
Let $\omega$ be a concise word.
Then, there exists a function $f:\N\rightarrow\N$ such that,
if $G$ is a group in which $|G_{\omega}|\le m$, then $|\omega(G)|\le f(m)$.
\end{thm}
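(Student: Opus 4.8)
The plan is to argue by contradiction through the ultraproduct construction, via {\L}o\'s's Theorem. Suppose the conclusion fails. Then there is an $m\in\N$ for which the set $\{\,|\omega(G)| : G \text{ a group with } |G_\omega|\le m\,\}$ is unbounded. If some such $G$ had $\omega(G)$ infinite while $|G_\omega|\le m$, this would already contradict the conciseness of $\omega$; so I may assume all these verbal subgroups are finite and pick, for each $j\in\N$, a group $G_j$ with $|(G_j)_\omega|\le m$ and $|\omega(G_j)|\ge j$. Fixing a non-principal ultrafilter $\UU$ on $\N$, I would form the ultraproduct $G=\prod_{j}G_j/\UU$, which is again a group. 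The property $|G_\omega|\le m$ is expressed by the first-order sentence $\exists y_1\cdots y_m\,\forall \bar z\,\bigvee_{i=1}^m \omega(\bar z)=y_i$, which holds in every $G_j$; so by {\L}o\'s's Theorem it holds in $G$, giving $|G_\omega|\le m$. The whole argument then reduces to proving that $\omega(G)$ is \emph{infinite}, for this contradicts conciseness once $G_\omega$ is known to be finite.

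The crux is this last point, and I would handle it quantitatively. For a subset $S$ of a group and $\ell\ge 0$, write $P_\ell$ for the set of products of at most $\ell$ elements of $S\cup S^{-1}$; with $S=(G_j)_\omega$ these ``word-length balls'' increase to $\omega(G_j)=\bigcup_{\ell}P_\ell$. The decisive observation is elementary: the sizes $|P_0|,|P_1|,\dots$ strictly increase until they stabilise, and the first moment $P_{\ell+1}=P_\ell$ forces $P_\ell$ to be a subgroup, hence all of $\omega(G_j)$; consequently $|P_\ell|\ge\min(\ell+1,\,|\omega(G_j)|)$. For a fixed target $N$ I would therefore use the \emph{same} length $\ell=N$ in every coordinate: since $|\omega(G_j)|\ge j\ge N+1$ whenever $j\ge N+1$, we get $|P_N(G_j)|\ge N+1$ for cofinitely many $j$, hence for $\UU$-almost all $j$. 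As ``$|P_N|>N$'' is, for fixed $N$, a first-order sentence (a finite disjunction over lengths and sign patterns of the factors), {\L}o\'s's Theorem yields $|P_N(G)|>N$, and thus $|\omega(G)|>N$. Letting $N$ range over $\N$ shows that $\omega(G)$ is infinite, and combined with the first paragraph this contradicts the conciseness of $\omega$, forcing the bounding function $f$ to exist.

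I expect the genuine obstacle to be exactly this middle step, namely converting ``$\omega(G_j)$ is large'' into the infinitude of $\omega(G)$. The difficulty is that membership in a verbal subgroup involves products of \emph{unbounded} length and so is not captured by any single first-order formula, while a ball of \emph{fixed} length can stay bounded even as the ambient groups grow (the cyclic groups $\Z/j\Z$ illustrate this). The ball-growth inequality $|P_\ell|\ge\min(\ell+1,|\omega(G_j)|)$ is precisely what permits a uniform length $\ell=N$ to work across almost all coordinates, and the non-principality of $\UU$ is what turns the resulting cofinite index set into a member of the ultrafilter. I would also note that a variant due to Mann, working directly from the definition of the ultraproduct and bypassing {\L}o\'s's Theorem, gives the same conclusion and provides an independent route.
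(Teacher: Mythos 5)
Your argument is correct, and it sits between the paper's two proofs of this theorem. The paper's first proof also runs through {\L}o\'s's Theorem, but it transfers information \emph{downward}: conciseness forces $\omega(Q)$ to be finite in the ultraproduct $Q$, hence $\omega$ has some finite width $k$ there; the sentence ``$\omega$ has width at most $k$'' then holds in $\UU$-almost every factor, which combined with $|(G_j)_\omega|\le m$ gives the uniform bound $|\omega(G_j)|\le (2m)^k$ and the contradiction. You instead transfer information \emph{upward}, showing directly that $\omega(Q)$ is infinite; your ball-growth inequality $|P_\ell|\ge\min(\ell+1,|\omega(G_j)|)$ is exactly the content of Lemma \ref{subset of bounded width}, which the paper uses only in its second ({\L}o\'s-free) proof, where the passage to the ultraproduct is instead done by hand via a cardinality computation (Lemma \ref{cardinality of the image}) and an inclusion of cartesian products into the verbal subgroup (Lemma \ref{omega into product}). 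So your route is the second proof's strategy executed with the first proof's tool; it buys you something, since you avoid both the downward transfer of width with its $(2m)^k$ estimate and the explicit bookkeeping of Lemmas \ref{cardinality of the image} and \ref{omega into product}. The one step worth spelling out in a final write-up is that ``$|P_N|>N$'' really is a single first-order sentence for each fixed $N$: pad products of length less than $N$ with $\omega(1,\ldots,1)=1$ and take a finite disjunction over the sign patterns of the factors, as you indicate; with that made explicit the argument is complete.
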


For the convenience of the reader, we begin by recalling briefly the construction of
ultraproducts of groups.
To this end, we need the concept of an ultrafilter.
(See \cite{Ekl} for an account on ultraproducts from an algebraic point of view.)

\begin{dfn}
A {\em filter\/} over a non-empty set $I$ is a non-empty family $\UU$ of subsets
of $I$ such that:
\begin{enumerate}
\item
The intersection of two elements of $\UU$ also lies in $\UU$.
\item
If $P$ is in $\UU$ and $P\subseteq Q$, then also $Q$ is in $\UU$.
\item
The empty set does not belong to $\UU$.
\end{enumerate}
The filter $\UU$ is called {\em principal\/} if it consists of all supersets of a
fixed subset of $I$, and it is called an {\em ultrafilter\/} if it is maximal in
the set of all filters over $I$ ordered by inclusion.
\end{dfn}

Equivalently, a filter $\UU$ over $I$ is an ultrafilter if and only if, for every
subset $J$ of $I$, either $J\in\UU$ or $I\setminus J\in\UU$.
By (i) and (iii) above, only one of these conditions holds.

The existence of non-principal ultrafilters is independent of the Zermelo-Fraenkel
axioms for set theory.
It can be easily proved by using the Axiom of Choice, but is in fact weaker than that.
On the other hand, an ultrafilter over $I$ is non-principal if and only if it contains
all cofinite subsets of $I$.

\begin{dfn}
Let $I$ be a non-empty set, and let $\UU$ be an ultrafilter over $I$.
The {\em ultraproduct\/} modulo $\UU$ of a family $\GG=\{G_i\}_{i\in I}$ of groups is the
quotient of the cartesian product $\prod_{i\in I} \, G_i$ (i.e.\ the unrestricted direct
product) by the subgroup consisting of all tuples $(g_i)_{i\in I}$ such that the set
\[
\{i\in I\mid g_i=1\}
\]
lies in $\UU$.
We denote this ultraproduct by $\GG_{\UU}$.
\end{dfn}

Thus two tuples $(g_i)_{i\in I}$ and $(h_i)_{i\in I}$ of the cartesian product define the
same element of the ultraproduct $\GG_{\UU}$ if and only if the set of indices $i$
for which $g_i=h_i$ lies in $\UU$.
In the remainder of the paper, we use the bar notation for the image of an element or a
subset of $\prod_{i\in I}\, G_i$ in an ultraproduct.

The first proof of Theorem \ref{quantitative} that we present is based on
the following particular case of {\L}o\'s's Theorem from model theory.
(See Theorem 3.1 and Corollary 3.2 of \cite{Ekl}.)

\begin{lem}
\label{los}
Let $\GG=\{G_i\}_{i\in I}$ be a family of groups and let $\UU$ be an ultrafilter
over $I$.
Then, a sentence in the first-order language of groups holds in the ultraproduct
$\GG_{\UU}$ if and only if the set of all $i\in I$ for which the sentence holds
in $G_i$ is a member of $\UU$.
\end{lem}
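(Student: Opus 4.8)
The plan is to prove, by induction on the syntactic complexity of first-order formulas, a statement more general than Lemma~\ref{los}, namely its extension to formulas with free variables: for every formula $\phi(x_1,\dots,x_n)$ in the language of groups and for any elements $\overline{a^1},\dots,\overline{a^n}$ of $\GG_{\UU}$, represented by tuples $a^j=(a^j_i)_{i\in I}$ in $\prod_{i\in I}G_i$, the formula $\phi$ is satisfied by $\overline{a^1},\dots,\overline{a^n}$ in $\GG_{\UU}$ if and only if the set $\{i\in I\mid \phi\text{ is satisfied by }a^1_i,\dots,a^n_i\text{ in }G_i\}$ belongs to $\UU$. Taking $n=0$ recovers the lemma, so the real content is the inductive step.

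First I would dispose of terms. Since $\GG_{\UU}$ is, by definition, the quotient of the cartesian product $\prod_{i\in I}G_i$ by a normal subgroup, its group operations are exactly those induced componentwise from the product; a straightforward induction on the structure of a term $t$ then shows that the value of $t$ at $\overline{a^1},\dots,\overline{a^n}$ is the class of the tuple $\bigl(t(a^1_i,\dots,a^n_i)\bigr)_{i\in I}$. For an atomic formula, which in the language of groups is an equation $s=t$ between terms, the claim is now immediate from the very definition of the ultraproduct: two tuples represent the same element of $\GG_{\UU}$ precisely when the set of coordinates on which they agree lies in $\UU$, and this set is exactly $\{i\mid s=t\text{ holds in }G_i\}$.

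Next come the Boolean connectives. For a conjunction $\psi_1\wedge\psi_2$ the relevant index set is the intersection of the two index sets attached to $\psi_1$ and $\psi_2$, and that this intersection lies in $\UU$ if and only if both sets do is exactly filter axioms (i) and (ii). The delicate step is negation $\neg\psi$: here one invokes the \emph{ultrafilter} property in the form ``$J\in\UU$ or $I\setminus J\in\UU$, but not both'', so that the index set of $\psi$ \emph{failing} to lie in $\UU$ is equivalent to the index set of $\neg\psi$ lying in $\UU$. Disjunction and implication then reduce to these two cases. I expect this to be the main obstacle conceptually, since it is the only point where maximality of $\UU$ — as opposed to its being a mere filter — enters, and it is precisely what would break down for a non-maximal filter.

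Finally, the quantifiers. For $\exists y\,\psi$, the forward direction is routine: a witness in $\GG_{\UU}$ is a tuple $(h_i)_{i\in I}$, and by the induction hypothesis the index set on which $\psi$ holds of $h_i,a^1_i,\dots,a^n_i$ lies in $\UU$; since this set is contained in the index set on which $\exists y\,\psi$ holds, the latter is in $\UU$ by axiom (ii). For the converse, if the index set $J$ of $\exists y\,\psi$ lies in $\UU$, I would use the Axiom of Choice to select, for each $i\in J$, an element $h_i\in G_i$ witnessing $\psi$ (and $h_i=1$ for $i\notin J$); the tuple $(h_i)_{i\in I}$ then witnesses $\exists y\,\psi$ in $\GG_{\UU}$ by the induction hypothesis. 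The universal quantifier is handled by rewriting $\forall y$ as $\neg\exists y\,\neg$ and appealing to the cases already established. This completes the induction, and with it the proof of Lemma~\ref{los}.
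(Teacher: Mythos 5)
Your argument is correct, but there is nothing in the paper to compare it against: the paper does not prove Lemma~\ref{los} at all, it simply quotes it as a special case of {\L}o\'s's Theorem and refers the reader to Theorem 3.1 and Corollary 3.2 of \cite{Ekl}. What you have written is the standard textbook proof of that theorem (restricted to the language of groups), and all the steps are in order: the strengthening to formulas with free variables and parameters is exactly the right induction hypothesis (and, since it is proved for \emph{every} choice of representing tuples, it automatically takes care of well-definedness modulo the ultraproduct congruence); the atomic case is correctly reduced, via the componentwise computation of terms, to the definition of the equivalence on $\prod_{i\in I}G_i$; the conjunction step uses only the filter axioms (i) and (ii); the negation step is correctly identified as the unique place where maximality of $\UU$ is needed, in the form ``$J\in\UU$ or $I\setminus J\in\UU$, but not both'' (the ``not both'' following from axioms (i) and (iii), as the paper itself observes after the definition of ultrafilter); and the existential step is handled by upward closure in one direction and by a choice of witnesses in the other. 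One remark worth making explicit: your appeal to the Axiom of Choice in the existential step is not incidental --- {\L}o\'s's Theorem is not a theorem of ZF alone --- and this is consonant with the paper's own caveat that the ultraproduct route to Theorem \ref{quantitative} leaves ZF, in contrast with the tree-based proof of Theorems A and B.
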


Recall that the {\em width\/} of a word $\omega$ in a group $G$ is the supremum, as
$g$ ranges over the verbal subgroup $\omega(G)$, of the minimum length of all
decompositions of $g$ as a product of elements of $G_{\omega}\cup G_{\omega}^{-1}$.
Obviously, if $G$ is finite, then $\omega$ has finite width in $G$.
We may similarly speak of the width of $\omega$ over a subset $S$ of $\omega(G)$,
by taking the supremum only over elements of $S$.

\begin{proof}[First proof of Theorem \ref{quantitative}]
This proof is based on the following two facts:

(i)
For a given positive integer $m$, the property that $\omega$ takes at most $m$ values
in a group can be expressed as a sentence in the first-order language of groups.
More precisely, if $\omega$ involves $k$ indeterminates, we may use the following
formula:
\[
\exists g_{11} \ldots \exists g_{1k} \ldots \exists g_{m1} \ldots \exists g_{mk}
\
\forall x_1 \ldots \forall x_k
\
\bigvee_{i=1}^m \, \omega(x_1,\ldots,x_k)=\omega(g_{i1},\ldots,g_{ik}).
\]

(ii)
For a given positive integer $n$, the property that $\omega$ has width at most $n$ in
a group can be expressed as a sentence in the first-order language of groups.
To see this, note that this property is equivalent to every product of $n+1$ elements of
$G_{\omega}\cup G_{\omega}^{-1}$ being also a product of $n$ elements of that set.

Assume, by way of contradiction, that there is an infinite sequence $G_n$ of groups
such that $|(G_n)_{\omega}|\le m$ for all $n\in\N$ but $|\omega(G_n)|$ goes to
infinity.
Choose a non-principal ultrafilter $\UU$, and let $Q=\GG_{\UU}$.
Then, by Lemma \ref{los} and (i), we have $|Q_{\omega}|\le m$.
It follows that $|\omega(Q)|$ is finite, since $\omega$ is concise.
Then $\omega$ has finite width, say $k$, in $Q$.
By Lemma \ref{los} again, this time used together with (ii), there is a subset
$J\in\,\UU$ such that $\omega$ has width at most $k$ in $G_n$ for all $n\in J$.
Since $|(G_n)_{\omega}|\le m$, it follows that $|\omega(G_n)|\le (2m)^k$ for every
$n\in J$.
This is incompatible with the condition $\lim_{n\to\infty} \, |\omega(G_n)|=\infty$:
since $\UU$ is a non-principal ultrafilter, every cofinite subset of $\N$ has
non-empty intersection with $J$.
\end{proof}

Now we give a second proof of Theorem \ref{quantitative}, which only needs the
definition and basic properties of ultraproducts, and which is independent of
{\L}o\'s's Theorem.
This proof basically follows an argument communicated to us by Avinoam Mann.

\begin{lem}
\label{cardinality of the image}
Let $\GG=\{G_n\}_{n\in\N}$ be a family of groups, and for every $n\in\N$, let $S_n$ be
a non-empty finite subset of $G_n$.
If $\UU$ is an ultrafilter over $\N$ then the cardinality of the image of
$S=\prod_{n\in\N}\, S_n$ in the ultraproduct $\GG_{\UU}$ is given by
\begin{equation}
\label{cardinality S bar}
|\overline S| = \sup_{J\in\,\UU} \, \Big( \min_{n\in J}\, |S_n| \Big),
\end{equation}
provided that the supremum is finite, and $\overline S$ is infinite otherwise.
In particular:
\begin{enumerate}
\item
If $|S_n|\le k$ for all $n$, then $|\overline S|\le k$.
\item
If the ultrafilter $\UU$ is non-principal and $|S_n|\ge k$ for big enough $n$,
then $|\overline S|\ge k$.
\end{enumerate}
\end{lem}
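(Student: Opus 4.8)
The plan is to prove the two inequalities between $|\overline S|$ and the quantity $N:=\sup_{J\in\,\UU}\min_{n\in J}|S_n|$, and then read off the two particular statements. Throughout I write $s^{(j)}=(s^{(j)}_n)_{n\in\N}$ for a tuple in $S=\prod_{n}S_n$, and I recall that two such tuples are identified in $\GG_{\UU}$ exactly when the set of indices on which they agree belongs to $\UU$; equivalently, $\overline{s^{(j)}}\ne\overline{s^{(j')}}$ precisely when $\{n:s^{(j)}_n\ne s^{(j')}_n\}\in\UU$. Note also that every $J\in\UU$ is non-empty, so each $\min_{n\in J}|S_n|$ is the minimum of a non-empty set of positive integers and is well defined.

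First I would prove that $|\overline S|\le N$. Suppose $\overline{s^{(1)}},\dots,\overline{s^{(r)}}$ are pairwise distinct elements of $\overline S$. For each pair $j<j'$ the set $D_{jj'}=\{n:s^{(j)}_n\ne s^{(j')}_n\}$ lies in $\UU$, and since a filter is closed under intersections of finitely many members, the set $J=\bigcap_{j<j'}D_{jj'}$ also lies in $\UU$. For every $n\in J$ the elements $s^{(1)}_n,\dots,s^{(r)}_n$ of $S_n$ are pairwise distinct, whence $|S_n|\ge r$; therefore $\min_{n\in J}|S_n|\ge r$ and so $N\ge r$. This bounds the number of pairwise distinct elements of $\overline S$ by $N$, which in particular forces $\overline S$ to be infinite whenever $N=\infty$.

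Next I would establish the reverse inequality $|\overline S|\ge N$ by exhibiting enough distinct elements. The key point is that, when $N$ is finite, the supremum defining $N$ is \emph{attained}: the quantities $\min_{n\in J}|S_n|$ form a non-empty set of positive integers bounded above by $N$, so some $J_0\in\UU$ satisfies $\min_{n\in J_0}|S_n|=N$, i.e.\ $|S_n|\ge N$ for all $n\in J_0$. For each $n\in J_0$ I would then choose $N$ distinct elements $s^{(1)}_n,\dots,s^{(N)}_n$ of $S_n$, and for $n\notin J_0$ set all the $s^{(j)}_n$ equal to a fixed element of $S_n$ (which exists since $S_n\ne\emptyset$). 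The resulting tuples $s^{(1)},\dots,s^{(N)}$ lie in $S$ and differ in every coordinate indexed by $J_0\in\UU$, so their images are pairwise distinct; hence $|\overline S|\ge N$. When $N=\infty$ the same construction, applied for each $r$ to a set $J\in\UU$ with $\min_{n\in J}|S_n|\ge r$, produces $r$ distinct elements for every $r$, confirming that $\overline S$ is infinite.

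Finally, statements (i) and (ii) follow at once. For (i), if $|S_n|\le k$ for all $n$ then $\min_{n\in J}|S_n|\le k$ for every $J\in\UU$, so $N\le k$ and hence $|\overline S|\le k$. For (ii), if $\UU$ is non-principal and $|S_n|\ge k$ for all large $n$, then the set $\{n:|S_n|\ge k\}$ is cofinite, hence lies in $\UU$, and it witnesses $N\ge k$, so $|\overline S|\ge k$. The only genuinely delicate point in the whole argument is the attainment of the supremum in the lower-bound step: it is exactly the integrality of the quantities $\min_{n\in J}|S_n|$, together with $\emptyset\notin\UU$, that lets one pass from a supremum over all $J\in\UU$ to a single index set $J_0$ on which the selection of $N$ distinct coordinates can actually be carried out.
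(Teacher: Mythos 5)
Your proof is correct and follows essentially the same route as the paper's: the upper bound by intersecting the finitely many disagreement sets $D_{jj'}\in\UU$ to find a single $J\in\UU$ on which all coordinates are distinct, and the lower bound by choosing tuples that are coordinatewise distinct on a set of $\UU$ (the paper does this for every $J\in\UU$ and takes the supremum at the end, whereas you first observe that the supremum of integer values is attained at some $J_0$; both are fine). One tiny slip: the sentence in your upper-bound paragraph claiming that the inequality $|\overline S|\le N$ ``forces $\overline S$ to be infinite whenever $N=\infty$'' is backwards, but this is harmless since you correctly derive the infinite case from the lower-bound construction in the next paragraph.
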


\begin{proof}
Let $J$ be an arbitrary element of $\UU$, and put $m=\min_{n\in J}\, |S_n|$.
Let us prove that $|\overline S|\ge m$, which gives one of the inequalities in
(\ref{cardinality S bar}).
For every $n\in\N$, we consider $m$ elements $s_n^{(1)},\ldots,s_n^{(m)}\in S_n$, which
we take different if $n\in J$ and arbitrary if $n\not\in J$.
Let
\[
s^{(i)} = (s^{(i)}_n)_{n\in\N}, \quad \text{for every $i=1,\ldots,m$.}
\]
We claim that the images of $s^{(i)}$ and $s^{(j)}$ in $\GG_{\UU}$ are different for all
$i\ne j$.
Otherwise, the tuples $s^{(i)}$ and $s^{(j)}$ coincide on a subset $X\in\UU$, but they
are different by construction on $J\in\UU$.
Hence $J\subseteq \N\setminus X$ and, by (ii) of the definition of a filter, we also have
$\N\setminus X\in\UU$.
Thus both $X$ and $\N\setminus X$ lie in $\UU$, which is impossible since $\UU$ is an
ultrafilter.
This proves our claim, and consequently that $|\overline S|\ge m$.
Observe that this also proves that $\overline S$ is infinite if the supremum in
(\ref{cardinality S bar}) is not finite.

For the reverse inequality, put $r=\sup_{J\in\UU} \, ( \min_{n\in J}\, |S_n| )$,
and assume that $r$ is finite.
By way of contradiction, suppose that $|\overline S|\ge r+1$.
If $s^{(1)},\ldots,s^{(r+1)}$ are elements of $S$ whose images in $\GG_{\UU}$ are all
different, then for all $i,j\in\{1,\ldots,r+1\}$, $i\ne j$, the set
\[
X_{ij} = \{n\in\N \mid s_n^{(i)}\ne s_n^{(j)} \}
\]
belongs to $\UU$.
Hence the intersection $J$ of all the $X_{ij}$ is also in $\UU$.
Now observe that, if $n\in J$, then $s_n^{(1)},\ldots,s_n^{(r+1)}$ are all different and,
consequently, $|S_n|\ge r+1$.
It follows that $\min_{n\in J} \, |S_n|\ge r+1$, which is a contradiction with the
definition of $r$.

Finally, observe that (i) is obvious, and that (ii) follows because a non-principal
ultrafilter contains all cofinite subsets.
\end{proof}

If $\omega$ is a word and $\{G_i\}_{i\in I}$ is an infinite family of groups, it is not
always the case that $\omega(\prod_{i\in I}\,G_i)=\prod_{i\in I}\,\omega(G_i)$, and only
the inclusion $\subseteq$ may be guaranteed.
Our next lemma is an approximation to the reverse inclusion.

\begin{lem}
\label{omega into product}
Let $\omega$ be a word, and let $\{G_i\}_{i\in I}$ be a family of groups.
Suppose that $S_i\subseteq\omega(G_i)$ for every $i\in I$, and that the width of $\omega$
can be uniformly bounded over all the subsets $S_i$.
Then
\[
\prod_{i\in I}\, S_i \subseteq \omega(\prod_{i\in I} \, G_i).
\]
\end{lem}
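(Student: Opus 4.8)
The plan is to reduce the statement to an explicit description of the $\omega$-values in the cartesian product and then to assemble a given tuple factor by factor. Write $P=\prod_{i\in I}G_i$. Since the group operations in $P$ are performed coordinatewise, the word $\omega$ is evaluated coordinatewise as well, so a tuple $(w_i)_{i\in I}$ is a value of $\omega$ in $P$ precisely when each $w_i$ is a value of $\omega$ in $G_i$; concretely, given values $w_i\in(G_i)_\omega$ one chooses for each $i$ a substitution realizing $w_i$ and packs these into tuples in $P$. Thus
\[
P_\omega = \prod_{i\in I}\, (G_i)_\omega .
\]
I would record at the outset the elementary but crucial fact that $1\in(G_i)_\omega$ for every $i$, since substituting the identity for all indeterminates of $\omega$ gives $1$; in particular $1$ also lies in $(G_i)_\omega^{-1}$.

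Next, let $N$ be a uniform bound for the width of $\omega$ over all the sets $S_i$. Then each $s_i\in S_i$ can be written as a product of at most $N$ elements of $(G_i)_\omega\cup(G_i)_\omega^{-1}$, and by inserting copies of $1$ (legitimate by the previous step) I may assume the product has exactly $N$ factors, say $s_i=v^{(i)}_1\cdots v^{(i)}_N$ with each $v^{(i)}_j\in(G_i)_\omega\cup(G_i)_\omega^{-1}$. For a fixed position $j$, I form the tuple $V_j=(v^{(i)}_j)_{i\in I}\in P$, and it suffices to show $V_j\in\omega(P)$, since then the coordinatewise identity $V_1\cdots V_N=(s_i)_{i\in I}$ places the target tuple in $\omega(P)$.

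The one genuine obstacle is that $V_j$ need not lie in $P_\omega\cup P_\omega^{-1}$: for some indices the factor $v^{(i)}_j$ is a value of $\omega$, while for others it is the inverse of a value, and such a mixed tuple is in general neither a value nor the inverse of a value in $P$. To overcome this I would split $V_j=B_jC_j$, where $B_j$ agrees with $V_j$ on the coordinates where $v^{(i)}_j$ is a value and equals $1$ elsewhere, and $C_j$ agrees with $V_j$ on the remaining coordinates and equals $1$ elsewhere; since in each coordinate exactly one of $B_j,C_j$ is nontrivial, the factorization holds coordinatewise despite non-commutativity. Because $1$ is both a value and an inverse-value in each $G_i$, we get $B_j\in\prod_i(G_i)_\omega=P_\omega$ and $C_j\in\prod_i(G_i)_\omega^{-1}=P_\omega^{-1}$. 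Hence each $V_j=B_jC_j$ is a product of a value of $\omega$ and an inverse of a value of $\omega$ in $P$, so $V_j\in\omega(P)$, and therefore $(s_i)_{i\in I}=V_1\cdots V_N\in\omega(P)$. In effect, the uniform width bound together with the presence of the identity among the $\omega$-values lets one express every element of $\prod_{i\in I}S_i$ as a product of at most $2N$ elements of $P_\omega\cup P_\omega^{-1}$, which is exactly the desired inclusion.
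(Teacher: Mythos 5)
Your proof is correct and follows essentially the same route as the paper: after padding each factorization to a common length $N$ using the identity, you split each positionwise tuple into a ``values'' part and an ``inverse-values'' part, which is exactly the paper's decomposition of $g_i$ into $2k$ factors $g_i^{(1)},\ldots,g_i^{(2k)}$ alternating between $(G_i)_{\omega}$ and $(G_i)_{\omega}^{-1}$. The only difference is presentational: you make explicit the facts that $1\in(G_i)_{\omega}$ and that $\omega$ is evaluated coordinatewise in the cartesian product, which the paper uses tacitly.
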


\begin{proof}
Let $g=(g_i)_{i\in I}\in\prod_{i\in I}\,S_i$.
If the width of $\omega$ is at most $k$ over all the subsets $S_i$, then every $g_i$ can
be written as a product of $k$ elements $x_i^{(1)},\ldots,x_i^{(k)}$ of
$(G_i)_{\omega}\cup (G_i)_{\omega}^{-1}$.
We use these elements to define $2k$ elements of $G_i$ as follows: for every
$j=1,\ldots,k$, we put
\[
g_i^{(2j-1)}
=
\begin{cases}
x_i^{(j)}, & \text{if $x_i^{(j)}\in (G_i)_{\omega}$,}
\\
1, & \text{otherwise,}
\end{cases}
\quad
\text{and}
\quad
g_i^{(2j)}
=
\begin{cases}
1, & \text{if $x_i^{(j)}\in (G_i)_{\omega}$,}
\\
x_i^{(j)}, & \text{otherwise.}
\end{cases}
\]
Then $g_i^{(2j-1)}\in (G_i)_{\omega}$, $g_i^{(2j)}\in (G_i)_{\omega}^{-1}$ and
$g_i = g_i^{(1)} \ldots g_i^{(2k)}$ for every $i\in I$.
If we put $g^{(r)}=(g_i^{(r)})_{i\in I}$ for $r=1,\ldots,2k$, it follows that
$g^{(2j-1)}\in (\prod_{i\in I}\, G_i)_{\omega}$ and
$g^{(2j)}\in (\prod_{i\in I}\, G_i)_{\omega}^{-1}$ for $j=1,\ldots,k$, and also
that $g=g^{(1)}\ldots g^{(2k)}$.
Thus $g\in \omega(\prod_{i\in I}\,G_i)$, as desired.
\end{proof}

\begin{lem}
\label{subset of bounded width}
Let $\omega$ be a word, and let $G$ be a group such that $|\omega(G)|\ge k$, where
$k$ is a positive integer.
Then, there exists a subset $S$ of $\omega(G)$ such that $|S|\ge k$ and $\omega$
has width less than $k$ over $S$.
\end{lem}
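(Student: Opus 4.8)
The plan is to measure how quickly one can reach all of $\omega(G)$ using short products of generators. For every integer $j\ge 0$, let $W_j$ denote the set of all elements of $G$ that can be written as a product of at most $j$ elements of $G_{\omega}\cup G_{\omega}^{-1}$, with the convention that $W_0=\{1\}$ (the empty product). By the very definition of width, an element $g$ lies in $W_j$ precisely when the minimum length of a decomposition of $g$ is at most $j$; hence it suffices to prove that $|W_{k-1}|\ge k$, for then $S=W_{k-1}$ is a subset of $\omega(G)$ of cardinality at least $k$ over which $\omega$ has width at most $k-1<k$.

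First I would record the basic structure of the chain $W_0\subseteq W_1\subseteq W_2\subseteq\cdots$. Writing $A=G_{\omega}\cup G_{\omega}^{-1}$, every product of $j+1$ generators factors as a product of $j$ generators times one more, so $W_{j+1}=W_j\cup W_jA$; moreover $\bigcup_{j\ge 0}W_j=\omega(G)$. The key observation, which I would isolate as the main step, is that this chain is \emph{strictly} increasing until it stabilises at $\omega(G)$: if $W_j=W_{j-1}$ for some $j$, then $W_jA\subseteq W_j$, whence $W_{j+1}=W_j\cup W_jA=W_j$, and inductively $W_i=W_j$ for all $i\ge j$, forcing $\omega(G)=\bigcup_i W_i=W_j$. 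Contrapositively, whenever $W_{j-1}\neq\omega(G)$ we must have $|W_j|\ge |W_{j-1}|+1$.

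From this I would deduce, by induction on $j$, the quantitative bound $|W_j|\ge\min(j+1,|\omega(G)|)$. The base case $j=0$ is clear, since $|W_0|=1$ and $|\omega(G)|\ge 1$. For the inductive step I would split according to whether $W_{j-1}$ already equals $\omega(G)$: if it does, the chain has stabilised and $|W_j|=|\omega(G)|$; if not, the strict-increase observation gives $|W_j|\ge |W_{j-1}|+1$, and combining this with the inductive hypothesis yields the desired bound. Taking $j=k-1$ and using the hypothesis $|\omega(G)|\ge k$ gives $|W_{k-1}|\ge k$, which completes the argument.

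The only delicate point is the stabilisation step, together with getting the bookkeeping of the induction right in the case where $\omega(G)$ is infinite or where $W_{j-1}$ has already exhausted $\omega(G)$; everything else is routine manipulation of the ball sets $W_j$.
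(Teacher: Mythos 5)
Your proof is correct and is essentially the paper's argument: the paper works with the spheres $T_i$ of elements of exact minimum length $i$ and splits into cases according to whether some $T_i$ with $i\le k-1$ is empty, which is precisely your observation that the balls $W_j=\bigcup_{i\le j}T_i$ strictly increase until they stabilise at $\omega(G)$. Your version merely packages the same idea as a single induction with the uniform choice $S=W_{k-1}$.
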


\begin{proof}
For every integer $i\ge 0$, let $T_i$ be the subset of all elements of $\omega(G)$
of (minimum) length $i$ with respect to the set of generators
$G_{\omega}\cup G_{\omega}^{-1}$.
Put $T=\cup_{i=0}^{k-1}\,T_i$.
If $T_i$ is non-empty for every $i=0,\ldots,k-1$, then $|T|\ge k$ and we may take
$S=T$.
If, on the contrary, $T_i$ is empty for some $i=0,\ldots,k-1$, then $\omega$ has width
at most $i-1$ in $G$, and then we may take $S=\omega(G)$.
\end{proof}

\begin{proof}[Second proof of Theorem \ref{quantitative}]
By way of contradiction, assume that there is a family $\{G_n\}_{n\in\N}$ of groups such that
$|(G_n)_{\omega}|\le m$ for all $n$, but nevertheless $\lim_{n\to\infty} \, |\omega(G_n)|=\infty$.
Let us fix an arbitrary positive integer $k$.
According to Lemma \ref{subset of bounded width}, if $n$ is big enough, there is a subset
$S_n$ of $\omega(G_n)$ such that $|S_n|\ge k$ and $\omega$ has width less than $k$ over $S_n$.
We complete the sequence $\{S_n\}_{n\in\N}$ by choosing the first terms equal to $1$.
Now, if $G=\prod_{n\in\N} \, G_n$ and $S=\prod_{n\in\N} \, S_n$, we have
\[
G_{\omega} = \prod_{n\in\N} \, (G_n)_{\omega},
\quad
\text{and}
\quad
S \subseteq \omega(G),
\]
where the last inclusion follows from Lemma \ref{omega into product}.
Consider now a non-principal ultrafilter $\UU$ over $\N$, and let $Q=\GG_{\UU}$ be the
corresponding ultraproduct.
Then $Q_{\omega}=\overline{(G_{\omega})}$ and
$\omega(Q)=\overline{\omega(G)}\supseteq \overline S$.
By applying Lemma \ref{cardinality of the image}, we obtain that $|Q_{\omega}|\le m$ and
$|\omega(Q)|\ge k$.
Since $k$ is arbitrary, we get $|\omega(Q)|=\infty$, which is a contradiction, since the word
$\omega$ is concise.
\end{proof}

\noindent
\textit{Acknowledgments.}
We want to thank Avinoam Mann for communicating to us the argument we have used in the second
proof of Theorem \ref{quantitative}, and for giving us permission to include it in this paper.
On the other hand, we want to express our gratitude to the University of the Basque Country
and the University of Padova for their hospitality while this work was carried out.


\begin{thebibliography}{9}

\bibitem{BKS}
S. Brazil, A. Krasilnikov, P. Shumyatsky,
\textit{Groups with bounded verbal conjugacy classes\/},
J. Group Theory \textbf{9} (2006), 127--137.

\bibitem{Ekl}
P.C. Eklof,
\textit{Ultraproducts for algebraists\/},
Chapter A.3 in J. Barwise (editor), \textit{Handbook of Mathematical Logic\/},
Studies in Logic and the Foundations of Mathematics \textbf{90},
North-Holland, Amsterdam, 1982, pp.\ 105--137.

\bibitem{Iva}
S.V. Ivanov,
\textit{P. Hall's conjecture on the finiteness of verbal subgroups\/},
Izv. Vyssh. Ucheb. Zaved. \textbf{325} (1989), 60--70.

\bibitem{NV}
P.M. Neumann, M.R. Vaughan-Lee,
\textit{An essay on BFC groups\/},
Proc. London Math. Soc. (3) \textbf{35} (1977), 213--237.

\bibitem{Rob}
D.J. Robinson, A course in the theory of groups, 2nd edition, Springer-Verlag, Berlin, 1996.

\bibitem{SS}
D. Segal, A. Shalev,
On groups with bounded conjugacy classes,
Quart. J. Math. Oxford (2) \textbf{50} (1999), 505--516.

\bibitem{T-S}
R.F. Turner-Smith,
\textit{Marginal subgroup properties for outer commutator words\/},
Proc. London Math. Soc. \textbf{14} (1964), 321--341.

\bibitem{T-S2}
R.F. Turner-Smith,
\textit{Finiteness conditions for verbal subgroups\/},
J. London Math. Soc. \textbf{41} (1966), 166--176.

\bibitem{Wil}
J. Wilson,
\textit{On outer-commutator words\/},
Canadian J. Math. \textbf{26} (1974), 608--620.

\end{thebibliography}
\end{document}